\chardef\bslash=`\\ 
\theoremstyle{plain}
\newtheorem{theorem}{Theorem}[section]
\newtheorem{lemma}[theorem]{Lemma}
\newtheorem{proposition}[theorem]{Proposition}
\newtheorem{conjecture}[theorem]{Conjecture}
\theoremstyle{remark}
\newtheorem{example}{Example}
\theoremstyle{definition}
\newtheorem{definition}[theorem]{Definition}
\newcommand{\Z}{\mathbb{Z}}
\newcommand{\Q}{\mathbb{Q}}
\DeclareMathOperator{\Aut}{Aut}
\DeclareMathOperator{\GL}{GL}
\DeclareMathOperator{\supp}{supp}
\begin{document}
\title[Free symmetric and unitary pairs]{Free symmetric and unitary pairs in the field 
of fractions of torsion-free nilpotent group algebras}

\author[V.~O.~Ferreira]{Vitor O. Ferreira}
\address{Department of Mathematics, University of S\~{a}o Paulo, S\~{a}o Paulo, SP, 
05508-090, Brazil}
\email{vofer@ime.usp.br}
\thanks{The first author was partially supported by FAPESP-Brazil, 
Proj.~Tem\'atico 2015/09162-9.}

\author[J.~Z.~Gon\c calves]{Jairo Z. Gon\c calves}
\address{Department of Mathematics, University of S\~{a}o Paulo, S\~{a}o Paulo, SP, 
05508-090, Brazil}
\email{jz.goncalves@usp.br}
\thanks{The second author was partially supported by FAPESP-Brazil, 
Proj.~Tem\'atico 2015/09162-9, and CNPq, Grant 301205/2015-9.}

\author[J. S\'anchez]{Javier S\'anchez}
\address{Department of Mathematics, University of S\~{a}o Paulo, S\~{a}o Paulo, SP, 
05508-090, Brazil}
\email{jsanchez@ime.usp.br}
\thanks{The third author was partially supported by FAPESP-Brazil, 
Proj.~Tem\'atico 2015/09162-9, and CNPq, Grant 307638/2015-4.}

\subjclass[2010]{Primary 16K40, 16W10, 16S85, 16W60; Secondary 20F18, 20F60}

\keywords{Infinite dimensional division rings, division
rings with involution, (residually) torsion-free nilpotent groups}

\date{21 May 2018}

\begin{abstract} Let $k$ be a field of characteristic different from $2$ and let
$G$ be a nonabelian residually torsion-free nilpotent group. It is known that
$G$ is an orderable group. Let $k(G)$ denote 
the subdivision ring of the Malcev-Neumann series ring generated by the group 
algebra of $G$ over $k$. If $\ast$ is an involution on $G$, then it extends to a unique $k$-involution
on $k(G)$. We show that $k(G)$ contains pairs of symmetric 
elements with respect to $\ast$ which generate a free group inside the 
multiplicative group of $k(G)$. Free unitary pairs also exist if $G$ is 
torsion-free nilpotent. Finally, we consider the general case of a division ring $D$, with 
a $k$-involution $\ast$, containing a normal subgroup $N$ in its multiplicative group,
such  that $G \subseteq N$, with $G$ a nilpotent-by-finite torsion-free subgroup 
that is not abelian-by-finite, satisfying $G^{*}=G$ and $N^{*}=N$. We prove that $N$ 
contains a free symmetric pair.
\end{abstract}
\maketitle

\section{Introduction}\label{S:Intro}

In \cite{Lichtman77}, Lichtman raises the interesting

\begin{conjecture}\label{C:LichtConj} The multiplicative group of a noncommutative 
division ring contains a free noncyclic subgroup.
\end{conjecture}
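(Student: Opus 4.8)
The plan is to reduce the conjecture to a finitely generated problem and then split into two regimes according to whether $D$ is finite- or infinite-dimensional over its center $Z$. Since $D$ is noncommutative, I can choose $a,b\in D$ with $ab\neq ba$ and replace $D$ by the division subring $\Delta$ they generate; as $\Delta^{\times}\subseteq D^{\times}$, it suffices to produce a free noncyclic subgroup of $\Delta^{\times}$, so I may assume that $D$ is generated by two elements. In particular its center $Z$ is a finitely generated field, and because a division ring algebraic over a finite field is commutative (Jacobson), $Z$ is not locally finite once $D$ is noncommutative.

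If $\dim_Z D=n^2<\infty$, the left regular representation embeds $D^{\times}$ into $\GL_n(\overline{Z})$, realizing it as a linear group over a field that is not locally finite. The Zariski closure of $D^{\times}$ is the algebraic group $\GL_1(D)$, which becomes $\GL_n$ after extending scalars to $\overline{Z}$ and is therefore reductive and nonsolvable as soon as $n\geq 2$, i.e.\ as soon as $D$ is noncommutative. The Tits alternative then yields a nonabelian free subgroup of $D^{\times}$, since $D^{\times}$ is not virtually solvable and the locally finite obstruction has already been excluded.

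If $\dim_Z D=\infty$, I would instead try to endow $D$, or a noncommutative division subring of it, with order- or valuation-theoretic structure so that it embeds into a Malcev--Neumann series division ring over an ordered group, exactly as is exploited throughout the body of this paper. Inside such a series ring one can exhibit free pairs explicitly by a ping-pong argument that controls leading terms and supports; this is the engine behind the constructions for (residually) torsion-free nilpotent groups established above, and it is the route I would pursue to convert structural information about $D$ into an actual free pair.

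The genuinely hard part is the infinite-dimensional case in full generality: an arbitrary infinite-dimensional division ring need carry neither a valuation nor an ordering, and need not contain any tractable noncommutative subring such as an ordered group algebra, so the Malcev--Neumann machinery simply does not apply. This is precisely why the conjecture remains open and why the theorems of this paper impose the hypotheses they do — a nilpotent-by-finite, torsion-free, non-abelian-by-finite subgroup $G$ together with a compatible involution — in order to guarantee access to a series ring and thereby to ping-pong. I would therefore expect a complete proof to require either a new mechanism producing free subgroups with no recourse to valuations or orderings, or a reduction of the unstructured infinite-dimensional case to one of the structured cases; it is this reduction, and not the construction of the free pair once structure is present, that I see as the main obstacle.
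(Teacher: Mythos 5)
You were asked to prove Conjecture~\ref{C:LichtConj}, but this statement is an \emph{open conjecture}: the paper contains no proof of it, and indeed exists precisely because the conjecture is unproved — Theorems~\ref{T:Main}, \ref{T:SubMain} and \ref{th:restorfrnil} establish only special cases. Your proposal is, by its own admission, not a proof either: the entire second half of your argument is an explicit concession that the infinite-dimensional case ``remains open'' and that the Malcev--Neumann/ping-pong machinery used in this paper requires structural hypotheses (an ordered or residually torsion-free nilpotent subgroup, a valuation) that an arbitrary infinite-dimensional division ring need not possess. That concession is accurate, but it means the proposal contains no argument at all in the regime that constitutes the actual content of the conjecture, so the gap is total there, not merely technical.

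Even the finite-dimensional half, which \emph{is} a known theorem (due to Gon\c calves; the involutional refinement is the result of \cite{GS06} quoted in the introduction), is sketched with two genuine flaws. First, the Tits alternative in characteristic $p>0$ is a statement about \emph{finitely generated} linear groups; your reduction makes $D$ finitely generated as a division ring, which does not make $D^{\bullet}$ finitely generated as a group. In positive characteristic a non-finitely-generated linear group may be locally finite and non-virtually-solvable without containing a free subgroup — $\GL_n(\overline{\mathbb{F}}_p)$ is the standard example — so the alternative only forces $D^{\bullet}$ to be (virtually solvable)-by-(locally finite) if it has no free subgroup, and excluding that configuration requires multiplicative-group theorems of Hua/Jacobson type applied to $D^{\bullet}$ itself, not just your observation that the center $Z$ is not locally finite. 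Second, your claim that a division subring generated by two elements has finitely generated center is unjustified (the center of a finitely generated division ring need not be a finitely generated field), though nothing in your argument actually uses it. In summary: your proposal correctly diagnoses \emph{why} the conjecture is open and correctly locates the finite-dimensional case among known results, but it proves nothing beyond them, and there is no proof in the paper against which it could be matched.
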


Frequently division rings come accompanied by involutions. One instance is the case of 
division rings of fractions of group algebras which are Ore domains. 

To be more precise, by an \emph{involution} on a group $G$ one understands an 
anti-automorphism of $G$ of order $2$. If $k$ is a field a $k$-\emph{involution} on a $k$-algebra
$R$ is a $k$-linear map $\ast$ satisfying $(xy)^{\ast}=y^{\ast}x^{\ast}$ and $(x^{\ast})^{\ast}=x$
for all $x,y\in R$. Thus, if $G$ is a group and $k$ is a field, the $k$-linear extension of
an involution on $G$ to the group algebra $k[G]$ is a $k$-involution. Suppose that $k[G]$ is embedded
in a division ring $D$ generated (as a division ring) by $k[G]$. There is at most one involution on $D$ which is the
extension of the involution on $k[G]$. When this is the case, we shall say that the $k$-involution on $D$ is \emph{induced}
by the involution of $G$. For example, if $k[G]$ is an Ore domain with division ring of fractions $D$, there always exists
a $k$-involution on $D$ induced by the involution of $G$.

An element $x$ of an algebra $R$ with an involution $\ast$ is called \emph{symmetric} if 
$x^{\ast}=x$ and \emph{unitary} if 
$xx^{\ast}=x^{\ast}x=1$. And if $x$ and $y$ are elements of $R$, we say that they form a 
\emph{free symmetric (resp.~unitary) pair} if both are symmetric 
(resp.~unitary) and generate a free noncyclic subgroup
of the group of units of $R$. We shall look into the existence of free symmetric
and unitary pairs in division rings with involution. In what follows, if $D$ is
a division ring, we shall denote the multiplicative group $D\setminus\{0\}$ of $D$
by $D^{\bullet}$.

In \cite{GS06}, an analogue of Conjecture~\ref{C:LichtConj} was proved for 
finite-dimensional division rings with 
involutions, namely: let $D$ be a division algebra over a field $k$ of characteristic
different from $2$ and let $\ast$ be a $k$-involution on $D$. If $D$ is finite
dimensional over its center, then it contains free symmetric and 
unitary pairs, unless it is a quaternion algebra with an appropriate involution in 
each case. 
In \cite{FG15}, \cite{GS12} and \cite{G17}, the existence of free symmetric and unitary 
pairs when $D$ is a division ring with involution, infinite-dimensional over its center,
$k$ was 
considered.

In particular, in \cite{G17}, it was shown that for the division ring of fractions of the group 
algebra $k[\Gamma]$ of the \emph{Heisenberg group} 
\[
\Gamma = \langle x, y : [x,[x,y]]=[y,[x,y]]=1 \rangle,
\]
where $[x,y]$ stands for the commutator $x^{-1}y^{-1}xy$, the following holds.

\begin{theorem}[{\cite[Theorem~2]{G17}}]\label{T:InvolHeisenb} Let $D$ be the division
ring of 
fractions of the group algebra $k[\Gamma]$ of the Heisenberg group $\Gamma$ over a field 
$k$ of characteristic different from $2$, and let $\ast$ be a $k$-involution of $D$ induced 
from an involution of $\Gamma$. Then $D$ contains both free symmetric and free unitary 
pairs with respect to $\ast$.
\end{theorem}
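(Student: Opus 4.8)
The plan is to realize $D$ concretely, reduce the involution to a short list of normal forms, and then transport the finite-dimensional theorem of \cite{GS06} from a suitable residue back up to $D$. First I would fix the presentation with $z=[x,y]$ central and $xy=zyx$; since $\Gamma$ is torsion-free, $z$ has infinite order and is therefore transcendental over $k$ in $D$, so $D$ is the division ring of fractions of the skew polynomial ring $k(z,y)[x;\sigma]$ with $\sigma(y)=zy$ and $\sigma(z)=z$, and a direct centralizer computation (using that $z$ is not a root of unity) shows its center is exactly $k(z)$. The involution $\ast$ induces on $\Gamma^{\mathrm{ab}}\cong\Z^{2}$ a linear involution $A\in\GL_2(\Z)$, and the commutator pairing gives $z^{\ast}=z^{-\det A}$. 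Up to conjugating $\ast$ by an automorphism of $\Gamma$ (which carries free symmetric and free unitary pairs to pairs of the same kind), this leaves two regimes: $z^{\ast}=z^{-1}$ (e.g.\ $x^{\ast}=x$, $y^{\ast}=y$) and $z^{\ast}=z$ (e.g.\ $x^{\ast}=y$, $y^{\ast}=x$).

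The heart of the argument is a specialization of the central parameter $z$ to a primitive $n$-th root of unity. I would construct a $\ast$-invariant valuation $v$ on $D$ lying over the cyclotomic place of $k(z)$ that sends $z\mapsto\zeta_{n}$; since $z$ is a unit, this is compatible with both $z^{\ast}=z$ and $z^{\ast}=z^{-1}$. I expect the residue $\overline{D}$ to be the division ring of the quantum torus at $\zeta_{n}$, i.e.\ a degree-$n$ symbol algebra generated by $\overline{x},\overline{y}$ with $\overline{x}\,\overline{y}=\zeta_{n}\overline{y}\,\overline{x}$, which is \emph{finite}-dimensional over its (infinite) center $k(\zeta_{n})(\overline{x}^{\,n},\overline{y}^{\,n})$ and inherits an involution $\overline{\ast}$ from $v$. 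Taking $n\geq 3$, the finite-dimensional theorem of \cite{GS06} then produces in $\overline{D}$ a free symmetric pair and a free unitary pair with respect to $\overline{\ast}$, the exceptional quaternion case being excluded by the choice of $n$.

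It remains to lift. Given a free symmetric pair $\overline{a},\overline{b}\in\overline{D}$, I would choose $v$-units $a_{0},b_{0}\in D$ reducing to them and set $a=\tfrac12(a_{0}+a_{0}^{\ast})$ and $b=\tfrac12(b_{0}+b_{0}^{\ast})$; here $\charac k\neq 2$ is used, and since $\overline{a_{0}}=\overline{a_{0}^{\ast}}=\overline{a}$ these are symmetric units with the prescribed residues. Because the residue map is a group homomorphism on $v$-units, any reduced relation in $a,b$ would reduce to a relation in $\overline{a},\overline{b}$; hence $a,b$ generate a free group and form a free symmetric pair. For the unitary case I would proceed analogously, correcting a lift by a Cayley transform built from a skew element—the central skew element $z-z^{-1}$ when $z^{\ast}=z^{-1}$, or $x-y$ when $z^{\ast}=z$—so that the lift becomes genuinely unitary while retaining the free unitary residue supplied by \cite{GS06}.

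The \textbf{main obstacle} is precisely the construction in the middle step: producing a genuine $\ast$-invariant valuation on the infinite-dimensional division ring $D$ whose residue is the claimed finite-dimensional symbol algebra carrying an involution. This demands the valuation theory of these quantum (Malcev--Neumann) division rings—extending the cyclotomic valuation of the center, identifying the residue division algebra and computing its degree, and verifying $v(d^{\ast})=v(d)$ so that $\overline{\ast}$ is a well-defined involution on $\overline{D}$. Once this is secured, the remainder is bookkeeping together with the black-box application of \cite{GS06} and the elementary lifting lemmas above.
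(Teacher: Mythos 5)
Your overall skeleton---specialize the central parameter $z$ to a primitive root of unity and land in a finite-dimensional symbol algebra---is exactly the paper's mechanism (Proposition~\ref{P:SpecQuatSymbol}, obtained by localizing $R[X][Y;\sigma]$ at the completely prime ideal generated by the minimal polynomial of $\theta$, via \cite[Lemma~3.2]{GP15}), and both your remark that freeness pulls back along the residue map and your symmetrized lift $a=\tfrac12(a_0+a_0^{\ast})$ are sound. The direction of transport, however, is where your proof breaks. The paper never lifts anything from the residue: it writes down elements of $D$ that are \emph{exactly} symmetric or unitary upstairs (e.g.\ $u=(1-zx)(1-z^{-1}x)^{-1}$, which is unitary because $(zx)^{\ast}=z^{-1}x$ and all four factors lie in the commutative subfield generated by $z$ and $x$), and uses the specialization only to certify that the images generate a free group, via \cite[Theorem~2]{GMS99} and \cite[Proposition~2.4]{GL14}. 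You instead take a free unitary pair in the residue from \cite{GS06} and try to lift it, and your correcting step fails concretely: if $u_0$ is a $v$-unit lift of a unitary and $c$ is \emph{any} unitary of $D$, then $(cu_0)(cu_0)^{\ast}=c\,(u_0u_0^{\ast})\,c^{-1}$, so multiplying a lift by the Cayley transform of a skew element cannot repair $u_0u_0^{\ast}\neq 1$; worse, your proposed skew element $z-z^{-1}$ is \emph{central}, so its Cayley transform is a central unitary and changes neither unitarity nor freeness. Since the local ring $T$ is not complete, no Hensel-type lifting of unitaries is available, and \cite{GS06} used as a black box does not deliver its unitaries in Cayley form; the only evident repair is to construct exact unitaries in $D$ and push down, which is precisely the paper's route.

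A secondary gap: your reduction ``up to conjugation by an automorphism of $\Gamma$'' to the two representatives $(x^{\ast}=x,\,y^{\ast}=y)$ and $(x^{\ast}=y,\,y^{\ast}=x)$ is incomplete. In $\GL_2(\Z)$ the involutions $I$, $-I$, $\diag(1,-1)$ and the swap are pairwise non-conjugate, and an involution of $\Gamma$ is not determined by its abelianization in any case---central corrections occur, which is exactly why Lemma~\ref{le:class2} must renormalize generators (the elements $x=h_1^{2\varepsilon_1}z_1$, etc.) before arriving at the three normal forms $(x^{\ast}=x,\,y^{\ast}=y)$, $(x^{\ast}=x^{-1},\,y^{\ast}=y^{-1})$ and $(x^{\ast}=x,\,y^{\ast}=y^{-1})$ used in the proof of Theorem~\ref{T:Main}. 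Since your unitary corrections are tied to your two specific representatives, this incompleteness propagates into the broken step above. By contrast, what you flag as the ``main obstacle''---a $\ast$-compatible reduction---is the least problematic part: the ideal generated by $f(z)$ is invariant under $z\mapsto z^{\pm 1}$ because $\theta^{-1}$ is again a primitive $q$-th root of unity, so $T^{\ast}=T$ and the residue inherits an involution; note only that when $z^{\ast}=z^{-1}$ this residue involution is of the second kind on the center $P(\theta)(a,b)$, a point you would still have to check against the hypotheses of \cite{GS06}.
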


Our objective in Section~\ref{SN} is, following the ideas of \cite{FG15}, to present a proof of

\begin{theorem}\label{T:Main} Let $k[G]$ be the group algebra of a nonabelian torsion-free
nilpotent group $G$ over a field $k$ of characteristic different from $2$ and let $D$ be the division
ring of fractions of $k[G]$. Let $\ast$ be a $k$-involution on $D$ which is induced by an 
involution of $G$.
Then $D$ contains both free symmetric and free unitary pairs with respect to $\ast$.
\end{theorem}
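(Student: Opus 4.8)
The plan is to reduce the statement to the Heisenberg case already settled in Theorem~\ref{T:InvolHeisenb}. Concretely, I would prove the group-theoretic fact that a nonabelian torsion-free nilpotent group $G$ carrying an involution $\ast$ always contains a $\ast$-invariant subgroup $H$ that is nonabelian, $2$-generated, and torsion-free nilpotent of class $2$; such an $H$ has infinite cyclic commutator subgroup, so that its division ring of fractions is the Heisenberg skew field. Granting this, the reduction is routine: since $H\le G$ is orderable, $k[H]$ is an Ore domain, and the division subring $D_H\subseteq D$ generated by $k[H]$ is its division ring of fractions; as $H^{\ast}=H$ we have $k[H]^{\ast}=k[H]$, so $\ast$ restricts to a $k$-involution of $D_H$ induced by the involution $\ast|_{H}$ of $H$. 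Theorem~\ref{T:InvolHeisenb} then provides free symmetric and free unitary pairs in $D_H^{\bullet}\subseteq D^{\bullet}$, proving Theorem~\ref{T:Main}.

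To produce $H$ I would work with the central series, using that the terms $\gamma_i(G)$ of the lower central series and $Z_i(G)$ of the upper central series are invariant under $\ast$ (being invariant under every anti-automorphism of $G$). Writing $c$ for the nilpotency class, the subgroup $\gamma_{c-1}(G)$ lies in the second centre $Z_2(G)$ and is not contained in $Z(G)$, so I may pick $a\in\gamma_{c-1}(G)\setminus Z(G)$; the map $\phi\colon G\to Z(G)$, $g\mapsto[a,g]$, is then a nontrivial homomorphism into the torsion-free abelian group $Z(G)$, and it is $\ast$-compatible in the sense that $\phi(g^{\ast})=(\phi(g)^{\ast})^{-1}$. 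By replacing $a$ with the symmetric element $aa^{\ast}$ or the antisymmetric element $a(a^{\ast})^{-1}$, I would arrange a non-central $s\in Z_2(G)$ with $s^{\ast}=s^{\pm 1}$ for which $\phi_s=[s,-]$ is still nontrivial. Finally, for a suitable $b\in G$ the symmetric element $w=bb^{\ast}$ satisfies $[s,w]=\phi_s(b)\,(\phi_s(b)^{\ast})^{-1}$, and choosing $b$ so that this is nontrivial makes $H=\langle s,w\rangle$ a $\ast$-invariant nonabelian class-$2$ subgroup generated by two symmetric elements; this already exhibits a free symmetric pair, with the unitary pair supplied by Theorem~\ref{T:InvolHeisenb} applied to $D_H$.

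The main obstacle is precisely this $\ast$-equivariant choice of generators: the involution need not fix any naive pair $a,b$, and forcing invariance can collapse the commutator. The delicate point is the action of $\ast$ on $Z(G)$, whose symmetric and antisymmetric parts need not jointly generate the image of $\phi_s$, so that $\phi_s$ could vanish on all symmetric elements; circumventing this requires exploiting the symmetric and antisymmetric symmetrizations together, along with a separate treatment of the class $c=2$ case, where $a$ and $a^{\ast}$ need not commute and $aa^{\ast}$ is symmetric only up to an antisymmetric central factor that one cannot in general halve inside the torsion-free group. Once a $\ast$-invariant nonabelian class-$2$ subgroup is secured, the last point of care is to confirm that its division ring of fractions is the Heisenberg skew field, if necessary after passing to a finite-index $\ast$-invariant subgroup isomorphic to $\Gamma$, so that Theorem~\ref{T:InvolHeisenb} genuinely applies.
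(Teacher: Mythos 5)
Your overall architecture coincides with the paper's: the paper also reduces to a $\ast$-invariant Heisenberg subgroup (Proposition~\ref{prop:InvHeis}) and explicitly remarks that one may then finish by citing Theorem~\ref{T:InvolHeisenb} (it only reproves the Heisenberg case via simpler specializations to quaternion and symbol algebras). Your reduction step is sound, and your observation that a $2$-generated nonabelian torsion-free nilpotent group of class $2$ is isomorphic to $\Gamma$ is correct (a nonabelian torsion-free quotient of $\Gamma$ is $\Gamma$), so no passage to a finite-index subgroup is needed. The genuine gap is that the group-theoretic heart --- producing generators with $x^{\ast}=x^{\pm1}$, $y^{\ast}=y^{\pm1}$ \emph{exactly} --- is not proved in your sketch, and the devices you propose fail at precisely the points you flag as obstacles. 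Concretely: (i) your antisymmetrizations are not antisymmetric unless the two factors commute: $\bigl(b(b^{\ast})^{-1}\bigr)^{\ast}=b^{-1}b^{\ast}$ while $\bigl(b(b^{\ast})^{-1}\bigr)^{-1}=b^{\ast}b^{-1}$, and these differ by a commutator which is central only in the class-$2$ situation, and even then is a central factor you cannot halve in a torsion-free group; this defect also infects your first generator when $c=2$. (By contrast, $aa^{\ast}$ is \emph{always} exactly symmetric, since $(aa^{\ast})^{\ast}=aa^{\ast}$; your claim that it is symmetric only up to an antisymmetric central factor is a slip.) (ii) Your second generator rests on ``choosing $b$ so that $[s,bb^{\ast}]=\phi_s(b)(\phi_s(b)^{\ast})^{-1}$ is nontrivial,'' but $\phi_s$ can vanish on all norms $bb^{\ast}$ (whenever $\phi_s(b^{\ast})=\phi_s(b)^{-1}$ for all $b$; e.g., under $g^{\ast}=g^{-1}$ every norm is $1$), and then you are thrown back on the defective antisymmetrization of (i). So ``choose $b$ suitably'' is exactly the unproved content, as you half-concede.

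The missing idea is the paper's Lemma~\ref{le:class2}: in class $2$, diagonalize the involution induced on $G/C\cong\Z^n$ over $\Q$, lift eigenvectors to elements $h\in G$ with $h^{\ast}=h^{\varepsilon}z$, $z\in C$, observe the forced constraint $z^{\ast}=z^{-\varepsilon}$, and correct by the doubling trick $x=h^{2\varepsilon}z$, for which a direct computation gives $x^{\ast}=x^{\varepsilon}$ exactly; non-commutation survives because $[x,y]=[h_1,h_2]^{4\varepsilon_1\varepsilon_2}\ne 1$ in a torsion-free group. For class $c>2$ the paper does not work inside $\gamma_{c-1}(G)\le Z_2(G)$ as you do (your first generator $s$ is actually fine there, since $\gamma_{c-1}(G)$ is abelian for $c>2$, so one of $aa^{\ast}$, $a(a^{\ast})^{-1}$ is exactly (anti)symmetric and noncentral by torsion-freeness); instead it inducts on the class via $G/C$ and repairs $\ast$-invariance with the subgroup $K=\langle x,x^{\ast},z,z^{\ast}\rangle$, shown to have class $2$, to which Lemma~\ref{le:class2} applies. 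Without this lemma, or an equivalent mechanism manufacturing exactly (anti)symmetric noncommuting pairs, your argument does not close; with it, your reduction plus the citation of Theorem~\ref{T:InvolHeisenb} would be a valid, if less self-contained, proof.
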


Regarding the existence of free symmetric pairs, we can also handle the case of normal
subgroups of division rings containing a torsion-free nilpotent-by-finite subgroup not abelian-by-finite, as the following result shows. We note that, by \cite[1.3.2]{LR04}, the class
of nilpotent-by-finite groups contains the class of supersoluble groups.

\begin{theorem}\label{T:SubMain} Let $D$ be a division ring with center $k$ of
characteristic different from $2$, with a
$k$-involution $\ast$. Let $N$ be a normal subgroup of $D^{\bullet}$ and assume that
$N$ contains a torsion-free nilpotent-by-finite subgroup $G$ which is not abelian-by-finite. 
If both $G$ and $N$ are invariant under $\ast$, then $N$ contains a free symmetric pair
with respect to $\ast$.
\end{theorem}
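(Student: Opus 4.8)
The plan is to cut $G$ down to a finitely generated piece sitting inside $N$, to build free pairs in the division subring it generates by re-running the computations behind Theorem~\ref{T:Main}, and finally to push these pairs into $N$ using its normality.

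First I would shrink $G$. As $G$ is nilpotent-by-finite it has a normal nilpotent subgroup $M$ of finite index; $M$ is torsion-free, and it is nonabelian, since otherwise $G$ would be abelian-by-finite. Because $G^{\ast}=G$, the anti-automorphism $\ast$ permutes the finite-index normal subgroups of $G$, so $M\cap M^{\ast}$ is a $\ast$-invariant, nonabelian (again by the hypothesis on $G$), torsion-free nilpotent subgroup of finite index. Picking $x,y\in M\cap M^{\ast}$ with $[x,y]\ne 1$, set $G_{1}=\langle x,y,x^{\ast},y^{\ast}\rangle$: it is finitely generated, $\ast$-invariant, nonabelian, torsion-free nilpotent and contained in $N$. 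Passing, if necessary, to the last nontrivial term of the lower central series of $\langle x,y\rangle$, I may moreover assume that $z:=[x,y]\ne 1$ is central in $H:=\langle x,y\rangle$, so that $H$ is a copy of the Heisenberg group $\Gamma$ and $z$ has infinite order. I would then replace $D$ by the division subring $D_{0}$ generated by $G_{1}$ and $N$ by the $\ast$-invariant normal subgroup $N\cap D_{0}^{\bullet}$ of $D_{0}^{\bullet}$; note that, because $z$ is a central non-root-of-unity, $D_{0}$ is a quantum-torus-type division ring, infinite-dimensional over its center, so none of the exceptional finite-dimensional configurations of \cite{GS06} can occur.

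Next I would produce the pairs in $D^{\bullet}$. The constructions proving Theorem~\ref{T:Main} and Theorem~\ref{T:InvolHeisenb} depend only on the relation $yx=z\,xy$ with $z$ central of infinite order, and their freeness is certified by the degree valuation of the quantum torus rather than by $z$ being transcendental; hence the same computations apply to the division subring generated by $H$ and yield both a free symmetric and a free unitary pair in $D^{\bullet}$ with respect to $\ast$. For this the involution must be made to act on a Heisenberg configuration: this is automatic for the canonical involution $g\mapsto g^{-1}$, under which every subgroup is $\ast$-invariant, and in general I would arrange a $\ast$-invariant Heisenberg-like subgroup inside $G_{1}$.

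The decisive step is to move a free pair into $N$ while keeping it symmetric, and here the key mechanism is that \emph{unitary conjugation preserves both symmetry and membership in~$N$}: if $u$ is unitary, so $u^{\ast}=u^{-1}$, and $s\in N$ is symmetric, then $usu^{-1}=usu^{\ast}$ again lies in $N$, by normality, and is symmetric. Thus the free unitary pair produced above acts by congruence on $\Sigma_{N}:=\{s\in N:s^{\ast}=s\}$, and it is enough to exhibit one noncentral $s\in\Sigma_{N}$ together with two words $w_{1},w_{2}$ in the free unitary group for which $w_{1}sw_{1}^{-1}$ and $w_{2}sw_{2}^{-1}$ generate a free group. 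A noncentral symmetric element of $N$ is obtained from normality: for $g\in G_{1}$ noncentral and generic $d\in D^{\bullet}$ the commutator $w=[d,g]$ lies in $N$ and is nontrivial, and $s=ww^{\ast}\in\Sigma_{N}$ can be shown to be noncentral for a suitable choice of $d$ (the point being delicate precisely for involutions, such as the canonical one, where $N$ contains no obvious symmetric group elements, so that $\charac k\ne 2$ and the Cayley transform relating symmetric, skew and unitary elements are needed). I expect this last step --- simultaneously securing a noncentral symmetric $s\in N$ and verifying, by a ping-pong argument in the valued division ring $D$, that two of its unitary conjugates are free --- to carry the main weight of the proof; once it is in place, the congruence construction delivers the required free symmetric pair inside $N$.
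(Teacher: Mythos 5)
Your opening reduction (pass to a $\ast$-invariant nonabelian torsion-free nilpotent subgroup of finite index via $M\cap M^{\ast}$) matches the paper, and your deferred construction of a $\ast$-invariant Heisenberg configuration is harmless since Proposition~\ref{prop:InvHeis} supplies it. The genuine gap is in your middle step. Inside an abstract division ring, $z=[y,x]$ need not be transcendental over the prime field: it can be algebraic over $\Q$ while still of infinite order (e.g.\ $z=2$, i.e.\ $y^{-1}xy=2x$), so your inference from ``$z$ central of infinite order'' to ``the same computations as in Theorem~\ref{T:Main} apply'' fails. Those computations rest on the specialization of Proposition~\ref{P:SpecQuatSymbol}, which sends $z$ to a root of unity $\theta$ and is built by localizing $P[\lambda]$ at the minimal polynomial of $\theta$ --- this requires $z$ transcendental, and no ring homomorphism can send an algebraic number such as $2$ to $-1$. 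Nor is freeness in Theorem~\ref{T:Main} certified by ``the degree valuation of the quantum torus'': it is certified after specialization, by the quaternionic criteria of \cite{GMS99} and, for unitary pairs, by \cite[Proposition~2.4]{GL14} over the function field $F$. The paper therefore splits into two cases, and when $z$ is algebraic over $\Q$ it invokes the construction from the proof of \cite[Theorem~1.3]{GP15}, which produces a specialization with $\psi(x^{2^{n}})=\mathbf{i}$, $\psi(y)=\mathbf{j}$, $\psi(z^{2^{n}})=-1$ for suitable $n$, forcing the substitution of $x^{2^{n}}$ for $x$ in all formulas. Your proposal has no counterpart to this case.

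The second gap is that your decisive step is deferred rather than proved. You propose to conjugate one noncentral symmetric $s\in N$ by two words in a free unitary pair and to verify freeness ``by a ping-pong argument in the valued division ring $D$'' --- but $D$ carries no given valuation, you exhibit neither the words nor the verification, and the free unitary pair you would conjugate by is available only through the flawed middle step (indeed the paper does not claim unitary pairs at all in the setting of Theorem~\ref{T:SubMain}). The paper's mechanism is shorter and avoids unitaries entirely: with $x,y$ in the invariant Heisenberg subgroup and units such as $u=1+x$, $v=1+y$ (with variants like $1+y+y^{-1}$ in the other involution cases), the elements $s=uy^{-1}u^{-1}y$ and $t=vx^{-1}v^{-1}x$ lie in $N$ because $N$ is normal in $D^{\bullet}$ and $x,y\in N$; hence $ss^{\ast}$ and $tt^{\ast}$ are symmetric elements of $N$, and their images, e.g.\ $\psi(ss^{\ast})=(1-a)^{-2}(1+\mathbf{i})^{4}$ and $\psi(tt^{\ast})=(1-b)^{-2}(1+\mathbf{j})^{4}$, generate a free group by \cite[Theorem~2]{GMS99} and \cite[Proposition~16]{GMS99}. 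Your observation that $ww^{\ast}\in N$ is symmetric for a commutator $w=[d,g]$ is exactly the right germ; the repair is to run it with \emph{two} such elements and certify freeness through the specialization, rather than through the unproven unitary-conjugation ping-pong.
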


One can say that Theorem~\ref{T:SubMain} is an involutional version, for symmetric 
elements, of the following result of Lichtman.

\begin{theorem}[{\cite[Theorem~2]{Lichtman78}}]\label{T:LichtmanNilpInNormal} Let 
$D$ be a division ring and let $N$ be a normal subgroup of $D^{\bullet}$.
Suppose that there exists a nonabelian nilpotent-by-finite subgroup $G$ of $N$. Then $N$ contains a noncyclic free subgroup.
\end{theorem}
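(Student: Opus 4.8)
The key structural point I would exploit is that $G$, being nilpotent-by-finite, is virtually solvable and therefore contains \emph{no} noncyclic free subgroup; hence the desired free pair cannot live inside $G$ and must be assembled inside $N$ out of $D^{\bullet}$-conjugates of elements of $G$. This is precisely where normality enters: if $g\in G\subseteq N$ and $h\in D^{\bullet}$, then $hgh^{-1}\in N$, and since $N$ is a subgroup, arbitrary products of such conjugates again lie in $N$. Accordingly I would first replace $G$ by $\langle a,b\rangle$ for some $a,b\in G$ with $[a,b]\neq 1$ (still finitely generated, nonabelian, and nilpotent-by-finite, hence polycyclic-by-finite), replace $N$ by the normal closure of $G$ in $D^{\bullet}$, and replace $D$ by the division subring $\Delta$ generated by $G$, which is noncommutative because $G$ is nonabelian. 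It then suffices to find a free subgroup of the normal closure of $G$ inside $\Delta^{\bullet}$.

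The argument splits according to whether $G$ contains a nonabelian nilpotent subgroup whose central commutator has infinite order. Suppose it does: using the second centre I extract $a,b$ with $c:=[a,b]\neq 1$ central in $H:=\langle a,b\rangle$, giving the Weyl relation $ab=c\,ba$ with $c$ central in the subdivision ring generated by $H$, and $c$ of infinite order, so that $q:=c$ is a central non-root-of-unity and $a,b$ genuinely $q$-commute. Following the ideas of \cite{FG15} I would then put a valuation on the twisted group ring determined by this relation and run a ping-pong argument on the valuation data, producing two elements of $N$ — built as products of conjugates of $a$ and $b$ — that generate a free group.

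In the complementary case (no nonabelian nilpotent subgroup, or every such central commutator is torsion) I would argue that $\Delta$ is finite dimensional over its centre $Z$: an abelian-by-finite $G$ yields a crossed product of a field by a finite group, finite dimensional over its centre, while a torsion central commutator yields a quantum torus at a root of unity, again finite dimensional over its centre. Here $Z$ is infinite, since a finite-dimensional division ring over a finite centre would be finite, hence commutative by Wedderburn; thus $\Delta^{\bullet}$ is the group of points over the infinite field $Z$ of a non-solvable algebraic group. A Tits-type genericity argument over $Z$, in the spirit of \cite{GS06}, then selects free generators expressible as products of $\Delta^{\bullet}$-conjugates of elements of $G$, which therefore lie in $N$. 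It is essential to allow \emph{products} of conjugates rather than single conjugates: when $G$ is finite nonabelian (for instance the quaternion group inside a quaternion division algebra) every conjugate of a generator is torsion and cannot itself lie in a free group, whereas suitable products of such conjugates do generate one.

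The main obstacle is the free-pair criterion in each regime: verifying the ping-pong inequalities, respectively the genericity conditions, for the elements constructed, \emph{while} certifying that they are expressible as products of $D^{\bullet}$-conjugates of elements of $G$ and hence genuinely lie in $N$. Two further delicate points are the positive-characteristic situation, where central commutators may be forced to be roots of unity so that the transcendence argument of the first case degenerates into the second, and the verification that in the complementary case $\Delta$ really is finite dimensional over its centre, so that the dichotomy is exhaustive and correctly matched to the two constructions. The reductions to a finitely generated $G$, to the normal closure $N$, and to $\Delta$ are, by contrast, routine.
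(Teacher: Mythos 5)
A point of calibration first: the paper does not prove this statement at all --- it is quoted verbatim from \cite[Theorem~2]{Lichtman78} as motivation --- so the only internal benchmark is the paper's proof of Theorem~\ref{T:SubMain}, its involutional analogue. Your architecture does mirror that proof in broad strokes: reduce to a nonabelian nilpotent subgroup with central commutator $c=[a,b]$, manufacture elements of $N$ as products of $D^{\bullet}$-conjugates of elements of $G$ (exactly the paper's $s=uy^{-1}u^{-1}y$, $t=vx^{-1}v^{-1}x$, which lie in $N$ because $N$ is normal and contains $x,y$), and certify freeness by mapping the $q$-commuting relation onto a quaternion or symbol algebra and invoking valuation criteria as in \cite{GMS99} and \cite{GL14}. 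Your reductions (finitely generated $G$, normal closure, the division subring $\Delta$ generated by $G$) are sound, as is your observation that the free pair cannot sit inside $G$ itself.

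The genuine gap is that your dichotomy is not exhaustive. That $c$ has infinite order only says $c$ is not a root of unity; it does not make $c$ transcendental over the prime field, and nothing prevents, say, $[a,b]=2$ in $D$. In that situation $\Delta$ is the fraction division ring of a quantum torus at a non-root of unity, hence \emph{infinite}-dimensional over its center, so your second regime does not apply; and in your first regime no valuation can ``see'' $c$, because $ab=cba$ forces $v(c)=v(ab)-v(ba)=0$ for every valuation, so the proposed ping-pong on valuation data has nothing to bite on. Your closing remark that this case ``degenerates into the second'' is therefore false as stated: $\Delta$ itself is not finite dimensional there. What actually closes this case is the specialization mechanism the paper imports from \cite{GP15} and uses in the ``$z$ algebraic over $\Q$'' branch of the proof of Theorem~\ref{T:SubMain}: choose a place of $\Q(c)$ with finite residue field, so that the \emph{residue} of $c$ is a root of unity, pass to powers $x^{2^n}$ so that $z^{2^n}$ specializes to $-1$, build the local subring $T$ with $T/B$ a quaternion algebra (cf.\ Proposition~\ref{P:SpecQuatSymbol} and \cite[Lemma~3.2]{GP15}), and only then pull back free pairs --- applied to elements prefabricated to lie in $N$. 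Separately, in the genuinely finite-dimensional case your ``Tits-type genericity'' as described produces a free subgroup of $\Delta^{\bullet}$, not of $N$; the clean repair is to apply the Tits alternative to the normal subgroup $N\cap\Delta^{\bullet}$ itself (it is linear over the infinite center, and then the free pair lies in $N$ automatically, with no need for conjugate products) and to kill the virtually solvable horn by subnormal-structure theorems (solvable, respectively finite, normal subgroups of $D^{\bullet}$ are central), with extra care in characteristic $p$, where the Tits alternative requires finite generation. As written, ``genericity selecting products of conjugates'' is a hope rather than an argument, and the paper's own admission that the involutional analogue remains open precisely in the finite-dimensional case should warn you that this horn carries real content.
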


Unfortunately, we do not have a complete analogue of this result, since we are still 
unable to handle the case $N \vartriangleleft D^{\bullet}$ when $D$ is finite-dimensional 
over its center $k$. This happens precisely when $G$ is abelian-by-finite.

The restriction on the characteristic of $k$ is usual, since many strange things happen 
for involutions when the characteristic is $2$. For instance, our technique to construct 
specializations from  the division ring of fractions of the Heisenberg group algebra to a 
quaternion algebra collapses in characteristic $2$. 

Section~\ref{SR} presents a completion argument that allows the main result on free
symmetric pairs
of Section~\ref{SN} to be extended to residually torsion-free nilpotent groups. More precisely, 
given a field $k$, a residually torsion-free nilpotent group $G$ and a total ordering 
$<$ on $G$ such
that $(G,<)$ is an ordered group, let $k(G)$ denote the subdivision ring of the
Malcev-Neumann series ring $k((G,<))$ generated by the group algebra $k[G]$. Let
$\ast$ be a $k$-involution on $G$ which is induced by an involution of $G$.
In \cite[Theorem~2.9]{FGS13}, it was proved that $\ast$ has a unique extension 
to a $k$-involution of $k(G)$. The main result of Section~\ref{SR} is the following.

\begin{theorem}\label{th:restorfrnil}
Let $k$ be a field of characteristic different from $2$ and let $G$ be a nonabelian residually 
torsion-free nilpotent group with an involution $\ast$. Then $k(G)$ contains a free
symmetric pair with respect to the $k$-involution induced by $\ast$. 
\end{theorem}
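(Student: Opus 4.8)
The plan is to reduce to the torsion-free nilpotent case already settled in Theorem~\ref{T:Main} by means of a $\ast$-compatible specialization coming from the Malcev--Neumann structure of $k(G)$. First I would produce a nonabelian torsion-free nilpotent quotient that still carries the involution. Since $G$ is nonabelian, fix $a,b\in G$ with $[a,b]\neq 1$; by residual torsion-free nilpotence there is a normal subgroup $N_0\trianglelefteq G$ with $G/N_0$ torsion-free nilpotent and $[a,b]\notin N_0$. As $\ast$ is an anti-automorphism of order two, $N_0^{\ast}$ is again normal and $G/N_0^{\ast}$ is torsion-free nilpotent (being anti-isomorphic to $G/N_0$), so $N:=N_0\cap N_0^{\ast}$ is a $\ast$-invariant normal subgroup. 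Then $\bar G:=G/N$ embeds in $G/N_0\times G/N_0^{\ast}$ and is therefore torsion-free nilpotent, while $[a,b]\notin N$ makes it nonabelian. The involution descends to an involution $\bar{\ast}$ on $\bar G$, and Theorem~\ref{T:Main} applied to $k[\bar G]$ yields a free symmetric pair $u,v$ in the division ring of fractions $k(\bar G)$ with respect to $\bar{\ast}$.

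Next I would build the specialization. Choosing a total order on $G$ for which $N$ is convex and which is compatible with $\ast$ (possible along the lines of \cite{FGS13}), the lexicographic nature of the order identifies $k((G,<))$ with a crossed-product Malcev--Neumann series ring $K((\bar G,\bar<);\sigma,\tau)$ over the division ring $K=k((N,<))$, where $\bar G$ inherits the quotient order. The minimal-support valuation on $K$ has residue field $k$ and, because the order is $\ast$-compatible, is preserved by the involution; reducing coefficients through it induces a $\ast$-compatible place $\Psi$ from $K((\bar G,\bar<))$ to $k((\bar G,\bar<))$. Restricting $\Psi$ to $k(G)$ gives a $\ast$-compatible place $\phi$ whose valuation ring $\mathcal O\subseteq k(G)$ is $\ast$-invariant, which extends the canonical projection $k[G]\to k[\bar G]$, and whose residue ring contains $k(\bar G)$.

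Finally I would lift the pair. Pick any preimages $\hat u,\hat v\in\mathcal O$ of $u$ and $v$ and symmetrize them: since $\operatorname{char}k\neq 2$ and $\phi$ commutes with $\ast$, the elements $\tilde u=\tfrac12(\hat u+\hat u^{\ast})$ and $\tilde v=\tfrac12(\hat v+\hat v^{\ast})$ are symmetric, lie in $\mathcal O$, and satisfy $\phi(\tilde u)=u$, $\phi(\tilde v)=v$; as their images are nonzero, they are units of $\mathcal O$. The restriction of $\phi$ to the subgroup $\langle\tilde u,\tilde v\rangle$ is then a group homomorphism onto $\langle u,v\rangle$, which is free of rank two; since a two-generator group mapping onto a free group of rank two must itself be free of rank two, the pair $\tilde u,\tilde v$ is a free symmetric pair in $k(G)$. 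The main obstacle is the second step: one must verify that the order can be taken simultaneously convex on $N$ and compatible with $\ast$, and, more delicately, that the coefficient-reduction place is actually defined on all of the subdivision ring $k(G)$ (not merely on $k[G]$) with residue large enough to contain $k(\bar G)$, so that symmetric lifts of $u$ and $v$ genuinely exist inside $\mathcal O$.
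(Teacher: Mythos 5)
Your opening reduction is correct and parallels the paper: the paper takes $N=\sqrt{\gamma_n(G)}$ (isolators of the lower central series, which are automatically $\ast$-invariant), and your $N=N_0\cap N_0^{\ast}$ is an equally valid way to get a $\ast$-invariant normal subgroup with nonabelian torsion-free nilpotent quotient. The trouble starts with your reduction map. If you write $k((G,<))$ as series over $\bar G=G/N$ with coefficients in $K=k((N,<))$ and reduce coefficients through the minimal-support valuation of $K$, you get the \emph{wrong} map: an element $n\in N$ with $n>1$ is a one-term series of value $n$, so its residue is $0$ (and for $n<1$ it is not even in the valuation ring), whereas the canonical projection $k[G]\to k[\bar G]$ must send $n\mapsto 1$. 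So your $\Psi$ does not extend $k[G]\to k[\bar G]$, and it cannot produce preimages of elements of $k(\bar G)$. The map that does the job is coefficientwise \emph{augmentation} $\varepsilon_N\colon k[N]\to k$, which only makes sense on series whose coefficients lie in the group ring $k[N]$, not in $k((N,<))$; this is exactly the paper's Proposition~\ref{prop:homomorphismseries}, giving a ring homomorphism $\Phi_N\colon k[N]((\overline{G/N};\sigma,\tau,<))\to k((G/N;<))$. Relatedly, your hypothesis of a $\ast$-compatible order is in general unsatisfiable: if some $x\neq 1$ has $x^{\ast}=x^{-1}$ no order can be preserved by $\ast$, and if some $x\neq 1$ is symmetric no order can be reversed by it — and Proposition~\ref{prop:InvHeis} produces precisely such configurations. (Indeed $\ast$ extends only to $k(G)$, via Hughes' theorem as in \cite{FGS13}, not to the series ring.) Fortunately this compatibility is also unneeded: one verifies directly on $k[G]$ that $\Phi_N$ intertwines the involutions, because $\varepsilon_N(n_\alpha)=1$ and $\varepsilon_N(f^{\sigma(\alpha)})=\varepsilon_N(f)$.

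The deeper gap — which you flag but do not resolve — is the local-ring property of the domain $\mathcal O$. The natural domain of any such coefficient reduction, intersected with $k(G)$, is \emph{not} local: any $f\in k[N]$ with $\varepsilon_N(f)\neq 0$ that is not invertible in $k[N]$ lies in the domain and has nonzero image, yet $f^{-1}$ does not lie in the domain. Hence your step ``their images are nonzero, so they are units of $\mathcal O$'' fails, and the additive symmetrization $\tilde u=\tfrac12(\hat u+\hat u^{\ast})$ gives no control whatsoever over where $\tilde u^{-1}$ lives. The paper circumvents this by not constructing a place at all: given a symmetric $A=p_Aq_A^{-1}\in k(G/N)$, it lifts the coefficients of $p_A,q_A$ verbatim to $P_A,Q_A\in k[G]$ and takes the \emph{multiplicative} symmetric lift $X=P_AQ_A^{-1}(Q_A^{\ast})^{-1}P_A^{\ast}$, which satisfies $\Phi_N(X)=AA^{\ast}=A^2$; since the lowest coefficients of $P_A,Q_A,P_A^{\ast},Q_A^{\ast}$ lie in $k\setminus\{0\}$, these factors and hence $X^{\pm1},Y^{\pm1}$ all stay inside $k[N]((\overline{G/N};\sigma,\tau,<))$, where $\Phi_N$ is everywhere defined, and freeness of $\langle A^2,B^2\rangle$ pulls back to $\langle X,Y\rangle$. (Your final observation that a two-generated group surjecting onto $F_2$ is free is fine, but it is the existence of invertible symmetric lifts inside the domain of the reduction, not that group-theoretic step, that carries the weight of the proof.)
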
 

\section{Torsion-free nilpotent groups}\label{SN}

\subsection{Auxiliary results}\label{S:AuxRes}

%
%
%
%

The proofs of Theorems~\ref{T:Main} and \ref{T:SubMain} are obtained constructing  
specializations from the division ring of fractions of a skew polynomial ring to 
a division ring known to contain free subgroups.

The theory we use are detailed in \cite{GP15} and will be briefly recalled below.

By a \emph{specialization} from a division ring $D$ to a division ring $E$,
we mean an epimorphism $\psi\colon T \to E$ from a subring $T$ 
of $D$ such that every element of $T$ not in $\ker \psi$ is invertible in $T$. 
In particular, a pair of elements of $T$ mapping to a pair
in $D$ which generates a free subgroup in $E^{\bullet}$ will generate a free
subgroup in $D^{\bullet}$.


We shall be considering specializations to symbol and quaternion algebras, whose
definitions we recall.

Let $m$ be a positive integer which is invertible in $k$ and suppose that $k$
contains a primitive $m$-th root of unity $\theta$. Let $a,b\in k^{\bullet}$.
The \emph{symbol algebra} $\mathcal{S}_k(a,b,\theta)$ is the $k$-algebra generated
by $\mathbf{i}$ and $\mathbf{j}$ subject to
\[ \mathbf{i}^m=a, \quad \mathbf{j}^m=b, \quad 
\mathbf{j}\mathbf{i}=\theta\mathbf{i}\mathbf{j}.\]
The algebra $S_k(a,b,\theta)$ is central simple over $k$ of dimension $m^2$ with
basis $\{\mathbf{i}^r\mathbf{j}^s : 0\leq r,s \leq m-1\}$. In the case $m=2$, we
get the so called \emph{quaternion algebras}. Therefore,
the quaternion algebra $\mathcal{S}_k(a,b,-1)$ over a field $k$ of
characteristic $\ne 2$ is the $4$-dimensional
$k$-algebra with basis $\{1,\mathbf{i},\mathbf{j},\mathbf{k}\}$ with multiplication
satisfying
\[\mathbf{i}^2=a, \quad \mathbf{j}^2=b, \quad \mathbf{j}\mathbf{i} =
- \mathbf{i}\mathbf{j}=-\mathbf{k}.\]
Some symbol algebras are division rings. This will be the case in 
Proposition~\ref{P:SpecQuatSymbol} below.

Let $P$ be a prime field, that is, $P=\Q$, the field of rational number,
or $P=\mathbb{F}_p$, the field with
$p$ elements for a prime number $p$, and let $P(\lambda, X)$ be the rational function 
field in the commuting indeterminates $\lambda$ and $X$ over $P$. 
Let $\sigma$ be the $P$-automorphism
of $P(\lambda, X)$ such that $\sigma(\lambda)=\lambda$ and $\sigma(X)=\lambda X$, and consider
the skew polynomial ring $P(\lambda,X)[Y;\sigma]$ formed by skew polynomials of the form
$c_0+c_1Y+\dotsb+c_nY^n$, with $c_i\in P(\lambda, X)$, satisfying the commutation
rule $Yc=\sigma(c)Y$, for all $c\in P(\lambda, X)$. 

Our aim is to construct a specialization from the division ring of fractions $D$ of 
$P(\lambda,X)[Y;\sigma]$ to the symbol algebra $\mathcal{S}_F(a,b,\theta)$, where
$q>1$ is an integer not divisible by $p$, $\theta$ is a primitive $q$-th root of unity and
$F$ is the rational function field $P(\theta)(a,b)$ in the commuting indeterminates $a$ and $b$ over
$P(\theta)$. 

We define the domain of the specialization as follows. Let $f(\lambda)$ be the minimal
polynomial of $\theta$ over $P$. The  principal ideal $I=f(\lambda)P[\lambda]$ of 
$P[\lambda]$ 
is the kernel of the evaluation map $P[\lambda]\rightarrow \mathcal{S}_F(a,b,\theta)$, 
$\lambda\mapsto \theta$.
This homomorphism can be extended in a unique way to a homomorphism $\phi\colon 
R\rightarrow \mathcal{S}_F(a,b,\theta)$,
where $R$ stands for the localization of $P[\lambda]$ at $I$. Notice that the image 
of $\phi$
is contained in $P(\theta)$. Now, let $\varphi\colon R[X]\rightarrow 
\mathcal{S}_F(a,b,\theta)$
be the unique homomorphism coinciding with $\phi$ on $R$ such that $X\mapsto 
\mathbf{i}$. It is clear that the image of $\varphi$ is contained
in $P(\theta)(\mathbf{i})$. Because the powers of $\mathbf{i}$ are left 
$P(\theta)$-linearly independent, one can see that $\ker \phi=f(\lambda)R[X]$. 
Now, since $YX=\lambda XY$ in $R[X][Y;\sigma]$, 
there exists a unique extension of $\phi$ to a homomorphism 
$\psi\colon R[X][Y;\sigma]\rightarrow \mathcal{S}_F(a,b,\theta)$ 
sending $Y\mapsto \mathbf{j}$. We claim 
that $\ker \psi$ coincides with
the ideal of $R[X][Y;\sigma]$ generated by the central element $f(\lambda) $. Indeed, 
given $W\in \ker\psi$, since
the powers of $\mathbf{j}$ are left $P(\theta)(\mathbf{i})$-linearly independent,
as a polynomial in $Y$, $W$ must have 
all its coefficients in $\ker \varphi$. It follows that $W$ is a multiple of $f(\lambda)$, 
as desired. Note that $\ker\psi$
is a completely prime ideal of $R[X][Y;\sigma]$ and that $\bigcap_{n=0}^\infty 
(\ker\psi)^n=0$. Now \cite[Lemma~3.2]{GP15}
implies that $M\setminus \ker\psi$ is a right denominator set in $R[X][Y;\sigma]$. 
The localization
of $R[X][Y;\sigma]$ at $M$ is a local subring $T$ of $D$ and $\psi$ can be extended 
to $T$. Moreover, if $B$
is the maximal ideal of $T$ (and the kernel of $\psi$), then $T/B\cong 
\mathcal{S}_F(a,b,\theta)$.
So $\psi$ is the desired specialization.

Now, $D$ is isomorphic to the division 
ring of fractions of the group algebra $P[\Gamma]$ of the Heisenberg group $\Gamma$ 
over $P$ under a map satisfying $\lambda \mapsto [y,x], X\mapsto x, Y\mapsto y$.
Therefore, one obtains the following

\begin{proposition}\label{P:SpecQuatSymbol} 
Let $P$ be a prime field, let $q\geq 1$ be an integer not divisible by 
the characteristic of $P$ and let $\theta$ be a primitive $q$-th root of unity. 
Let $D$ be the division ring of fractions 
of the group algebra $P[\Gamma]$ of
the Heisenberg group $\Gamma$ over $P$. Then there exists a specialization $\psi$ from
$D$ to $\mathcal{S}_F(a,b,\theta)$ such that $\psi(x)=\mathbf{i}, \psi(y)=\mathbf{j}$ and 
$\psi([y,x])=\theta$, where $F$ stands for the rational function field 
$P(\theta)(a,b)$ in the commuting indeterminates $a$ and $b$ over $P(\theta)$.
\end{proposition}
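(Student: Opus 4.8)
The plan is to read the statement as a repackaging of the specialization constructed in the paragraphs above, once the two division rings in play have been identified. First I would fix the central element $z=[y,x]$ of the Heisenberg group $\Gamma$ and exhibit the group algebra as an iterated skew Laurent extension $P[\Gamma]=P[z^{\pm1},x^{\pm1}][y^{\pm1};\tau]$, where $\tau$ is the automorphism fixing $z$ and sending $x\mapsto xz$; this records the single defining relation $yx=zxy$. Passing to fraction fields, $P[z^{\pm1},x^{\pm1}]$ becomes $P(\lambda,X)$ under $z\mapsto\lambda$, $x\mapsto X$, and $\tau$ extends to $\sigma$. Since $P[\Gamma]$ is a Noetherian Ore domain, its division ring of fractions is therefore isomorphic to $D=\mathrm{Frac}(P(\lambda,X)[Y;\sigma])$ under the map sending $[y,x]\mapsto\lambda$, $x\mapsto X$, $y\mapsto Y$. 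It thus suffices to produce a specialization $\psi$ from this $D$ to $\mathcal{S}_F(a,b,\theta)$ with $X\mapsto\mathbf{i}$, $Y\mapsto\mathbf{j}$, $\lambda\mapsto\theta$, which is exactly what the construction preceding the statement delivers.

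Concretely, I would let $f(\lambda)$ be the minimal polynomial of $\theta$ over $P$, localize $P[\lambda]$ at the prime $I=f(\lambda)P[\lambda]$ to obtain the local ring $R$, and extend the evaluation $\lambda\mapsto\theta$ successively to ring homomorphisms $\phi\colon R\to\mathcal{S}_F(a,b,\theta)$, then $\varphi\colon R[X]\to\mathcal{S}_F(a,b,\theta)$ with $X\mapsto\mathbf{i}$, and finally $\psi\colon R[X][Y;\sigma]\to\mathcal{S}_F(a,b,\theta)$ with $Y\mapsto\mathbf{j}$. The relation $YX=\lambda XY$ is respected precisely because $\mathbf{j}\mathbf{i}=\theta\mathbf{i}\mathbf{j}$ in the symbol algebra, so $\psi$ is well defined. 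The crux is to identify $\ker\psi$ with the two-sided ideal generated by the central element $f(\lambda)$: writing an element of the kernel as a polynomial in $Y$ and using that the powers $\mathbf{j}^0,\dots,\mathbf{j}^{q-1}$ are left $P(\theta)(\mathbf{i})$-linearly independent forces each coefficient into $\ker\varphi$, while the analogous independence of the powers of $\mathbf{i}$ over $P(\theta)$ shows $\ker\varphi=f(\lambda)R[X]$; together these give $\ker\psi=(f(\lambda))$.

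With the kernel pinned down, I would verify that $\ker\psi$ is completely prime and that $\bigcap_{n\ge 0}(\ker\psi)^n=0$, which are the hypotheses needed to invoke \cite[Lemma~3.2]{GP15}. That lemma then guarantees that, setting $M=R[X][Y;\sigma]$, the complement $M\setminus\ker\psi$ is a right denominator set, so that localizing produces a local subring $T$ of $D$ to which $\psi$ extends, with maximal ideal $B=\ker\psi\cdot T$ and $T/B\cong\mathcal{S}_F(a,b,\theta)$. By definition of specialization, $\psi\colon T\to\mathcal{S}_F(a,b,\theta)$ is then the required map, and it carries $x,y,[y,x]$ to $\mathbf{i},\mathbf{j},\theta$ by construction.

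I expect the main obstacle to be the two kernel computations and the verification of the hypotheses of \cite[Lemma~3.2]{GP15}, rather than the final localization, which is formal once those are in hand. The linear-independence arguments depend essentially on $\theta$ being a primitive $q$-th root of unity with $q$ prime to $\charac P$, so that $\mathcal{S}_F(a,b,\theta)$ has the expected dimension $q^2$ and the monomials $\mathbf{i}^r\mathbf{j}^s$ form a basis; keeping track of this is where care is needed. Verifying $\bigcap_n(\ker\psi)^n=0$ reduces to the fact that $f(\lambda)$ is a non-unit, non-zero-divisor central element, so its powers cut out a strictly descending chain.
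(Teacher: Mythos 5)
Your proposal is correct and takes essentially the same route as the paper: the same tower of extensions $\phi\colon R\to\mathcal{S}_F(a,b,\theta)$, $\varphi\colon R[X]\to\mathcal{S}_F(a,b,\theta)$, $\psi\colon R[X][Y;\sigma]\to\mathcal{S}_F(a,b,\theta)$, the same identification $\ker\psi=(f(\lambda))$ via the left linear independence of the powers of $\mathbf{i}$ over $P(\theta)$ and of $\mathbf{j}$ over $P(\theta)(\mathbf{i})$, and the same appeal to \cite[Lemma~3.2]{GP15} to localize at the complement of the completely prime kernel. The only difference is that you spell out the isomorphism $\mathrm{Frac}(P[\Gamma])\cong \mathrm{Frac}(P(\lambda,X)[Y;\sigma])$ via the skew Laurent presentation $yx=zxy$, which the paper asserts in a single sentence, and your verification is accurate (though note that for $\bigcap_n(\ker\psi)^n=0$ one should argue via $\bigcap_n f(\lambda)^nR=0$ in the discrete valuation ring $R$ and freeness of $R[X][Y;\sigma]$ over $R$, since a strictly descending chain alone need not have zero intersection).
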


Our next step is to find inside a nonabelian torsion-free nilpotent group with
an involution an invariant Heisenberg subgroup.

We shall accomplish this in Proposition~\ref{prop:InvHeis}.

\begin{lemma}\label{le:01}
Let $G$ be a torsion-free nilpotent group with center $C$. If 
$G$ has an abelian subgroup $H$, containing $C$, such that for every 
$x\in G$, there exist a positive integer $r$ such that $x^rC\in H/C$,
then $G$ is abelian. 
\end{lemma}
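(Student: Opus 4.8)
The plan is to deduce the lemma from a purely local commutation fact about torsion-free nilpotent groups, after a trivial reformulation of the hypothesis. First I would observe that, since $C\subseteq H$, a coset $x^{r}C$ lies in $H/C$ if and only if $x^{r}\in H$. The hypothesis therefore says precisely that every element of $G$ has a positive power lying in the abelian subgroup $H$; the center $C$ serves only to give meaning to the quotient $H/C$ and plays no further role. It thus suffices to prove that an arbitrary pair $x,y\in G$ commutes.

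The tool I would invoke is the standard fact that centralizers in a torsion-free nilpotent group are isolated: for $u,v\in G$ and any integer $m\geq 1$,
\[
[u^{m},v]=1\quad\Longrightarrow\quad [u,v]=1 .
\]
I would establish this by induction on the nilpotency class of $G$. In the inductive step one passes to $\bar G=G/Z(G)$, which is again torsion-free nilpotent of smaller class (recall that for a torsion-free nilpotent group each upper central factor is torsion-free), and the inductive hypothesis gives $[u,v]\in Z(G)$. Once $[u,v]$ is central, the commutator identity $[ab,c]=[a,c]^{b}[b,c]$ telescopes to $[u^{m},v]=[u,v]^{m}$; hence $[u^{m},v]=1$ forces $[u,v]^{m}=1$, and torsion-freeness yields $[u,v]=1$.

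Granting this, the conclusion follows in two strokes. Given $x,y\in G$, choose $r,s\geq 1$ with $x^{r},y^{s}\in H$; since $H$ is abelian, $[x^{r},y^{s}]=1$. Reading this as $[u^{m},v]=1$ with $u=x$, $m=r$, $v=y^{s}$ gives $[x,y^{s}]=1$, and reading the resulting identity with $u=y$, $m=s$, $v=x$ gives $[y,x]=1$, that is, $[x,y]=1$. As $x$ and $y$ were arbitrary, $G$ is abelian.

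The only genuine obstacle is the commutation fact itself, which is exactly where torsion-freeness and nilpotency are used; everything else is formal. I would therefore either cite it as a classical property of torsion-free nilpotent groups or include the short induction above, the essential point being that in such groups a power of an element commuting with a second element forces the element itself to commute — a feature that fails without the torsion-free hypothesis. I would also note that the naive alternative of inducting on the class of $G$ directly in the statement of the lemma does not go through, because passing to $G/C$ does not guarantee that $H/C$ contains the center of $G/C$, which is why the route through the commutation fact is the natural one.
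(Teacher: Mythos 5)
Your proof is correct and takes essentially the same route as the paper's: both reduce the hypothesis (via $C\subseteq H$) to the statement that every element of $G$ has a positive power in the abelian subgroup $H$, and then invoke the fact that in a torsion-free nilpotent group $x^{r}y^{s}=y^{s}x^{r}$ forces $xy=yx$. The only difference is that the paper cites this fact from Kargapolov--Merzljakov (16.2.9), whereas you supply a correct self-contained inductive proof of it via isolated centralizers.
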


\begin{proof}
Given $x,y\in G$, let $r,s$ be positive integers such that $x^rC$ and $y^sC$
belong to $H/C$.
Since $H$ is abelian, it follows that $x^ry^s=y^sx^r$. But in a torsion-free
nilpotent group this implies $xy=yx$ (see, e.g. \cite[16.2.9]{KM79}). 
\end{proof}

We start by dealing with nilpotency class $2$. Recall that
in a torsion-free group, any two elements which do not commute, whose commutator 
commute with both, generate a Heisenberg subgroup. In what follows, we shall
frequently use the fact that if $G$ is a torsion-free nilpotent group with
center $C$, then $G/C$ is torsion-free (for a proof, see \cite[Lemma~11.1.3]{dP85}).

\begin{lemma}\label{le:class2}
Let $G$ be a finitely generated torsion-free nilpotent group of class $2$ with
involution $\ast$. Then $G$ contains a $\ast$-invariant Heisenberg subgroup. 
More precisely,
there exist $x,y\in G$ such that $[x,y]\ne 1$, $x^{\ast}=x^{\pm 1}, y^{\ast}=y^{\pm 1}$.
\end{lemma}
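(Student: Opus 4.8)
The plan is to exploit the involution's action on the abelianization and on the center to locate two non-commuting elements whose images behave predictably under $\ast$. Since $G$ is torsion-free nilpotent of class $2$, its center $C$ contains the derived subgroup $[G,G]$, and $G/C$ is a torsion-free abelian (hence free abelian, as $G$ is finitely generated) group. The involution $\ast$ fixes $C$ setwise (the center is characteristic, and an anti-automorphism preserves it), so $\ast$ induces an involution $\overline{\ast}$ on the finitely generated free abelian group $V=G/C$, written additively. Because $\ast$ is an anti-automorphism and $V$ is abelian, $\overline{\ast}$ is actually a group automorphism of $V$ of order dividing $2$, i.e. an involutory automorphism of $\Z^n$.

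The key step is to diagonalize, up to sign, the action of $\overline{\ast}$ on $V$. First I would pass from $\Z$-coefficients to $\Q$-coefficients: over $\Q$ the involutory linear map $\overline{\ast}$ is diagonalizable with eigenvalues $\pm 1$, giving a decomposition $V\otimes\Q = V_{+}\oplus V_{-}$ into the $(+1)$- and $(-1)$-eigenspaces. The goal is to produce $\overline x,\overline y\in V$ that do not commute in $G$ (equivalently, whose lifts $x,y$ satisfy $[x,y]\ne 1$) and that are each, up to a power, eigenvectors for $\overline{\ast}$, so that $x^\ast\equiv x^{\pm1}$ modulo $C$. The crucial point is that $G$ is nonabelian, so by Lemma~\ref{le:01} one cannot have every element of $G$ have a power landing in a single abelian subgroup containing $C$; in particular the commutator pairing on $V$ induced by $[\,\cdot\,,\cdot\,]\colon G\times G\to C$ is a nonzero alternating form, and I must find two eigenvectors of $\overline{\ast}$ that are non-orthogonal under this form.

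Concretely, I would argue that the alternating form cannot vanish identically on $V_{+}\times V_{+}$, on $V_{-}\times V_{-}$, and on $V_{+}\times V_{-}$ simultaneously, for otherwise it would be identically zero and $G$ abelian. Whichever of these three products carries a nonzero value supplies a pair of $\overline{\ast}$-eigenvectors $\overline u,\overline v$ with $[\,u,v\,]\ne 1$; clearing denominators returns us to genuine elements of $V=G/C$. Lifting $\overline u,\overline v$ to $u,v\in G$, each satisfies $u^\ast = u^{\varepsilon} c$ and $v^\ast = v^{\delta} c'$ for signs $\varepsilon,\delta\in\{\pm1\}$ and central elements $c,c'\in C$. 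The final step is to absorb these central correction terms: since $C$ is a $\ast$-invariant abelian group and $\charac k\ne 2$ guarantees the relevant square-root/adjustment available, I would modify $u,v$ by suitable central elements (or pass to powers) to arrange $x^\ast = x^{\pm1}$ and $y^\ast=y^{\pm1}$ exactly, while preserving $[x,y]\ne1$ because the commutator only depends on the images $\overline u,\overline v$ in $V$.

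The main obstacle I anticipate is the last step, reconciling the eigenvector condition modulo $C$ with the exact condition $x^\ast=x^{\pm1}$ in $G$. The difficulty is that the induced map on $V$ is only diagonalizable over $\Q$, not over $\Z$, so an integer eigenvector may genuinely require passing to a power $x^r$, and one must check that the central ambiguity $x^\ast x^{\mp1}\in C$ can be removed by a central adjustment compatible with $\ast$. Here I expect to use that $\ast$ acts on the torsion-free abelian group $C$ and that fixing or inverting an element modulo $C$ can be upgraded to an exact relation by solving a ``cohomological'' equation in $C$, which is solvable precisely because $C$ is torsion-free divisible enough after inverting $2$ — this is exactly where $\charac k\ne 2$ (and the orderability/torsion-freeness of $G$) enters. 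Everything else is bookkeeping with the commutator form and the eigenspace decomposition.
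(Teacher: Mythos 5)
Up to the construction of a noncommuting pair of eigenvector lifts, your argument is essentially the paper's: it too diagonalizes the induced involution on $G/C\cong\Z^n$ over $\Q$, lifts integer eigenvectors to elements $h_i\in G$, and uses Lemma~\ref{le:01} to find a noncommuting pair among them (your trichotomy for the commutator form on $V_{+}\oplus V_{-}$ is an equivalent repackaging of that step). The genuine gap is the final step, which you yourself flag as the main obstacle. Your proposed mechanism --- removing the central defect $c$ in $u^{\ast}=u^{\varepsilon}c$ by solving an equation of the form $w(w^{\ast})^{-1}=c$ or $ww^{\ast}=c$ in $C$, on the grounds that $C$ is ``torsion-free divisible enough after inverting $2$'' and that $\charac k\neq 2$ --- does not work. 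The lemma is a purely group-theoretic statement: no field appears in it, so the characteristic hypothesis cannot enter here (it is used elsewhere in the paper, for the specializations to quaternion algebras). And $C$ is a finitely generated torsion-free abelian group (subgroups of finitely generated nilpotent groups are finitely generated), hence nowhere divisible, so the square roots or halvings your ``cohomological'' equation requires generally do not exist in $C$. Passing to powers alone also fails: if $u^{\ast}=u^{\varepsilon}c$ then $(u^{r})^{\ast}=u^{r\varepsilon}c^{r}$, so the defect is merely raised to a power, never killed.

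The paper closes this gap with a short computation that needs no divisibility at all. Applying $\ast$ twice to $u^{\ast}=u^{\varepsilon}c$ forces the constraint $c^{\ast}=c^{-\varepsilon}$: indeed $u=(u^{\ast})^{\ast}=c^{\ast}(u^{\ast})^{\varepsilon}=c^{\ast}(u^{\varepsilon}c)^{\varepsilon}=c^{\ast}c^{\varepsilon}u$. Then the explicit element $x=u^{2\varepsilon}c$ is exactly a $\ast$-eigenvector: $x^{\ast}=c^{\ast}(u^{\ast})^{2\varepsilon}=c^{-\varepsilon}(u^{\varepsilon}c)^{2\varepsilon}=c^{-\varepsilon}c^{2\varepsilon}u^{2}=c^{\varepsilon}u^{2}=(u^{2\varepsilon}c)^{\varepsilon}=x^{\varepsilon}$. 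So the defect is absorbed by combining the two operations you mention --- squaring and multiplying by a central element --- but in a specific combination (multiply the square by the defect $c$ itself) whose success rests on the identity $c^{\ast}=c^{-\varepsilon}$ coming from $\ast^{2}=\mathrm{id}$, not on solvability of equations in $C$. With $x=u^{2\varepsilon}c$ and $y=v^{2\delta}c'$ the conclusion $[x,y]\neq 1$ follows as you say, since in class $2$ the commutator is bilinear and central elements drop out: $[x,y]=[u,v]^{4\varepsilon\delta}\neq 1$ by torsion-freeness.
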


\begin{proof}
%
%
Let $C$ denote the center of $G$. It
follows from the hypothesis on the nilpotency class of $G$ that $G/C$ is a finitely 
generated torsion-free
abelian group, so there exists an isomorphism $\varphi\colon G/C\to \Z^n$, for some integer 
$n\geq 2$ (since $G$ is nonabelian). Because $C$ is a characteristic subgroup of $G$, it 
is invariant under $\ast$; so $\ast$
induces an automorphism on $G/C$ whose square is the identity. Therefore, there 
exists $A\in\GL(n,\Z)$ such that $A^2=I_n$ and 
$\varphi(g^{\ast}C)=A\varphi(gC)$,
for every $g\in G$.

Since $A$ is diagonalizable over $\Q$ with eigenvalues $1$ or $-1$, there exists 
a $\Q$-basis $\{v_1,\dotsc,v_n\}$ of $\Q^n$ with $v_i\in\Z^n$ satisfying
$A v_i =\pm v_i$, for all $i=1,\dotsc,n$. For each $i=1,\dotsc, n$,
pick $h_i\in G$ such that $\varphi(h_iC)=v_i$ and let $H$ be the subgroup of $G$ generated
by $\{h_1,\dotsc,h_n\}$. 

Given $x\in G$, let $q_1,\dotsc,
q_n\in \Q$ be such that $\varphi(xC)=q_1v_1+\dotsb +q_nv_n$. Choose a positive integer $r$
such that $rq_i=a_i\in \Z$ for all $i=1,\dotsc,n$. It follows that $x^rC=h_1^{a_1}\dotsm
h_n^{a_n}C\in H/C$. Since $G$ is nonabelian, it follows from Lemma~\ref{le:01}, 
that $H$ is nonabelian; so, we can suppose that $[h_1,h_2]\ne 1$.

By the choice of these elements, it follows that there exist $z_1,z_2\in C$ such that
$h_i^{\ast}=h_i^{\varepsilon_i}z_i$, where $\varepsilon_i\in\{-1,1\}$, for $i=1,2$.
Moreover, these central elements satisfy $z_i^{\ast}=z_i^{-\varepsilon_i}$, since
$h_i=(h_i^{\ast})^{\ast}=(h_i^{\varepsilon}z_i)^{\ast}=z_i^{\ast}(h_i^{\ast})^{\varepsilon_i}=
z_i^{\ast}(h_i^{\varepsilon_i}z_i)^{\varepsilon_i}=z_i^{\ast}z_i^{\varepsilon_i}h_i$. 

We are now in a position to define the generators of a $\ast$-invariant subgroup
of $G$. 

Let $x=h_1^{2\varepsilon_1}z_1$ and $y=h_2^{2\varepsilon_2}z_2$. Then \
\begin{align*}
x^{\ast} &= (h_1^{2\varepsilon_1}z_1)^{\ast} = z_1^{\ast}(h_1^{\ast})^{2\varepsilon_1}=
z_1^{-\varepsilon_1}(h_1^{\varepsilon_1}z_1)^{2\varepsilon_1} = 
z_1^{-\varepsilon_1}z_1^{2\varepsilon_1}h_1^{2\varepsilon_1^2}\\
&=z_1^{\varepsilon_1}h_1^2 = (h_1^{2\varepsilon_1}z_1)^{\varepsilon_1}=x^{\varepsilon_1}
\end{align*}
And similarly, $y^{\ast}=y^{\varepsilon_2}$.

It remains to prove that $x$ and $y$ do not commute. For that, we shall use the fact
that in a nilpotent group of class $2$, the commutator is a homomorphism in each entry 
(see \cite[Lemma~3.4.1]{dP85}). So, one has
$[x,y]=[h_1^{2\varepsilon_1}z_1,h_2^{2\varepsilon_2}z_2]=
[h_1,h_2]^{4\varepsilon_1\varepsilon_2}\ne 1$, for $G$ is torsion-free.

%
\end{proof}

\begin{proposition}\label{prop:InvHeis}
Let $G$ be a nonabelian torsion-free nilpotent group with involution $\ast$. Then $G$ 
contains a $\ast$-invariant Heisenberg subgroup. More precisely, 
there exist $x,y\in G$ such that $[x,y]\ne 1$, $[x,[x,y]]=[y,[x,y]]=1$, 
$x^{\ast}=x^{\pm 1}, y^{\ast}=y^{\pm 1}$.
\end{proposition}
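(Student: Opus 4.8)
The plan is to reduce to the finitely generated case and then induct on the nilpotency class $c$ of $G$, using Lemma~\ref{le:class2} as the base case. First I would reduce to a finitely generated group: since $G$ is nonabelian, choose $g_0,h_0\in G$ with $[g_0,h_0]\neq 1$ and set $H=\langle g_0,h_0,g_0^{\ast},h_0^{\ast}\rangle$. Then $H$ is finitely generated, nonabelian, torsion-free nilpotent and $\ast$-invariant, and a $\ast$-invariant Heisenberg subgroup of $H$ is one of $G$; so I may assume $G$ is finitely generated of class $c\geq 2$. If $c=2$, Lemma~\ref{le:class2} finishes the argument, so assume $c\geq 3$.

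For the inductive step, let $C$ be the center of $G$. Then $C$ is $\ast$-invariant, $G/C$ is torsion-free nilpotent of class $c-1\geq 2$ (hence nonabelian), and $\ast$ descends to an involution $\bar\ast$ on $G/C$. By the induction hypothesis $G/C$ contains a $\bar\ast$-invariant Heisenberg subgroup, so there are $x,y\in G$ whose images generate it, with $[x,y]\notin C$ and $\bar x^{\,\bar\ast}=\bar x^{\pm 1}$, $\bar y^{\,\bar\ast}=\bar y^{\pm 1}$. Lifting these relations gives signs $\varepsilon,\delta\in\{-1,1\}$ and central elements $z_1,z_2\in C$ with $x^{\ast}=x^{\varepsilon}z_1$ and $y^{\ast}=y^{\delta}z_2$. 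Writing $w=[x,y]$, we have $w\neq 1$, and since the image of $w$ is central in the chosen Heisenberg subgroup, $[w,x]$ and $[w,y]$ both lie in $C$. Computing $\bar w^{\,\bar\ast}$ inside the Heisenberg quotient subgroup by biadditivity of the commutator yields $\bar w^{\,\bar\ast}=\bar w^{-\varepsilon\delta}$, so $w^{\ast}=w^{-\varepsilon\delta}z_3$ for some $z_3\in C$. Thus $x$, $y$ and $w$ are each $\pm$-symmetric modulo the center.

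Next I would correct these three elements to be \emph{exactly} $\pm$-symmetric by the device of Lemma~\ref{le:class2}: whenever $u^{\ast}=u^{\mu}z$ with $z\in C$ and $\mu\in\{-1,1\}$, one checks $z^{\ast}=z^{-\mu}$ and that $\hat u:=u^{2\mu}z$ satisfies $\hat u^{\ast}=\hat u^{\mu}$. Applying this produces $x',y',w'$ with $(x')^{\ast}=(x')^{\varepsilon}$, $(y')^{\ast}=(y')^{\delta}$ and $(w')^{\ast}=(w')^{-\varepsilon\delta}$. Now I split into three cases. If $[w,x]\neq 1$, then $[w',x']$ is a nonzero central power of $[w,x]$ (nonzero by torsion-freeness, central since $[w,x]\in C$), so $\langle w',x'\rangle$ is a Heisenberg group whose generators are $\pm$-symmetric, hence $\ast$-invariant. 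The case $[w,y]\neq 1$ is symmetric, using $\langle w',y'\rangle$. Finally, if $[w,x]=[w,y]=1$ then $w$ is central in $\langle x,y\rangle$, so $\langle x,y\rangle$ is already Heisenberg and $[x',y']=w^{4\varepsilon\delta}\neq 1$ is central, making $\langle x',y'\rangle$ the desired $\ast$-invariant Heisenberg subgroup.

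The step I expect to require the most care is the coordinated bookkeeping with the central correction terms: one must verify simultaneously that the errors $z_i$ cancel to give exact $\pm$-symmetry of all three elements $x'$, $y'$, $w'$ and that the defining commutator survives the passage to these adjusted generators as a nontrivial central element. Both points rest on torsion-freeness together with the biadditivity of commutators modulo the center, which are exactly the two ingredients already used in Lemma~\ref{le:class2}; the novelty here is only that the correction must be applied to the commutator $w=[x,y]$ as well, and that one verifies the Heisenberg condition directly in each of the three cases.
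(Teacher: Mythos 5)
Your proof is correct, and while it shares the paper's skeleton (reduction to a finitely generated $\ast$-invariant subgroup, induction on the nilpotency class with Lemma~\ref{le:class2} as base, and the same lifting of the induction hypothesis from $G/C$), your inductive step concludes differently from the paper's. The paper forms $L=\langle x,y,C\rangle$, which is $\ast$-invariant of class at most $3$, and reduces both cases back to Lemma~\ref{le:class2}: if $L$ has class $2$ the lemma applies to $L$ directly, and if $L$ has class $3$ (say $[x,z]\neq 1$, with $z=[x,y]$) it shows that $K=\langle x,x^{\ast},z,z^{\ast}\rangle$ is a $\ast$-invariant class-$2$ subgroup, using that $z^{\ast}\in\gamma_2(L)$ and $\gamma_3(L)$ is central, and applies the lemma to $K$. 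You instead bypass the second invocation of the lemma: you extract from its proof only the central-correction device ($u^{\ast}=u^{\mu}z$ with $z\in C$ forces $z^{\ast}=z^{-\mu}$, and $u^{2\mu}z$ is exactly $\mu$-symmetric), apply it to $x$, $y$ and to $w=[x,y]$ itself after computing $w^{\ast}\equiv w^{-\varepsilon\delta}\pmod C$, and then split on $[w,x]\neq 1$, $[w,y]\neq 1$, or $[w,x]=[w,y]=1$ --- a trichotomy that mirrors the paper's dichotomy on the class of $L$. All your verifications are sound: the $z_i$ lie in the center of $G$, so they drop out of commutators, and bilinearity in the class-$2$ situations gives $[w',x']=[w,x]^{4\mu\varepsilon}\neq 1$ (respectively $[x',y']=w^{4\varepsilon\delta}\neq 1$) by torsion-freeness. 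What the paper's route buys is economy of bookkeeping --- all sign and correction computations are quarantined inside Lemma~\ref{le:class2} and the inductive step is purely structural; what your route buys is that you never re-run the $\Q$-diagonalization argument of the lemma in the inductive step and you obtain explicit Heisenberg generators built directly from the lifted $x$, $y$, $w$, at the cost of the more delicate simultaneous correction of three elements that you yourself flag.
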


\begin{proof}
By taking the subgroup of $G$ generated by two noncommuting elements and their
images under $\ast$, we can assume that $G$ is finitely generated.

We shall argue by induction on the nilpotency class $c$ of $G$; the case
$c=2$ having been dealt with in Lemma~\ref{le:class2}.

Suppose $c>2$ and let $C$ denote the center of $G$. Then 
$G/C$ is a nonabelian finitely generated torsion-free nilpotent group of
class $c-1$, with an involution $\ast$ induced by $\ast$. By the induction
hypothesis, there exist $x,y\in G$ such that $x^{\ast}C=x^{\varepsilon}C$,
$y^{\ast}C=y^{\eta}C$, with $\varepsilon,\eta\in\{-1,1\}$, satisfying $z=[x,y]\notin C$
and $[x,z],[y,z]\in C$.
It follows that $L=\langle x,y,C\rangle$ is a $\ast$-invariant subgroup of $G$ of 
class $\leq 3$.

If $L$ has class $2$, then the result follows from Lemma~\ref{le:class2}. 

Suppose that $L$ has class $3$. Then $[x,z]\ne 1$ or $[y,z]\ne 1$.
Say $[x,z]\ne 1$. We shall show that $K=\langle x, x^{\ast},z,z^{\ast}\rangle$
is a $\ast$-invariant subgroup of $G$ of class $2$. It will be enough to
show that $[\alpha,\beta]$ lies in the center of $K$ for all 
$\alpha,\beta\in\{x,x^{\ast},z,z^{\ast}\}$. 
For each $n\geq 1$, let $\gamma_n(L)$ denote the $n$-th term in the lower central
series of $L$. 
Now, $z=[x,y]\in \gamma_2(L)$, so $z^{\ast}\in\gamma_2(L)$, because the terms in
the lower central series are fully invariant subgroups of $G$. It follows that for every 
$\alpha\in K$ and $\beta\in\{z,z^{\ast}\}$ we have $[\alpha,\beta]\in\gamma_3(L)$, which 
is a central subgroup
of $L$, hence $[\alpha,\beta]$ is central in $K$. Finally, that $[x,x^{\ast}]=1$
follows from the fact that $x^{\ast}x^{-\varepsilon}\in C$. So $K$ is indeed a 
$\ast$-invariant subgroup of $G$ of class $2$, hence, Lemma~\ref{le:class2} applies to it.
\end{proof}

\subsection{Proof of Theorem \ref{T:Main}}

By Proposition~\ref{prop:InvHeis}, $G$ contains elements $x,y$ such that $z=[y,x]\ne 1, 
[x,z]=[y,z]=1$, satisfying one of the following conditions
\begin{enumerate}
\item $x^{\ast}=x, y^{\ast}=y$; so, clearly, $z^{\ast}=z^{-1}$;
\item $x^{\ast}=x^{-1}, y^{\ast}=y^{-1}$; hence $z^{\ast}=z^{-1}$;
\item $x^{\ast}=x, y^{\ast}=y^{-1}$ and $z^{\ast}=z$.
\end{enumerate}

Let $P$ be the prime subfield of $k$ and let $D^{\prime}$ be the subdivision ring of
$D$ generated by $x$ and $y$ over $P$. Then $D^{\prime}$ is the division ring of
fractions of the group algebra $P[\Gamma]$, where $\Gamma$ is the subgroup of $G$
generated by $x$ and $y$, which is a Heisenberg group. The involution $\ast$ of
$G$ restricts to an involution $\ast$ on $\Gamma$. So, we could use 
Theorem~\ref{T:InvolHeisenb}
to guarantee that $D^{\prime}$ and, \textit{a fortiori}, $D$ contain free symmetric
and unitary pairs.

However a simpler argument than the one used in \cite{G17} is available here. So
we reprove the fact that $D^{\prime}$ contains both free symmetric and unitary pairs.
We shall make use of Proposition~\ref{P:SpecQuatSymbol} and use the notation
$\mathcal{S}_q$ for the symbol algebra $\mathcal{S}_F(a,b,\theta)$ and
$\mathcal{H}$ for the quaternion algebra $\mathcal{S}_2$.

We consider each of the three possibilities for $\ast$ on the generators $x$ and $y$
of $\Gamma$ at a time.

\begin{enumerate}
\item $x^{\ast}=x, y^{\ast}=y, z^{\ast}=z^{-1}$.
  \begin{itemize}
  \item\textit{Free symmetric pairs}. Set $u=1+x$, $v=1+y$, and let $\psi$ be 
	the specialization from $D^{\prime}$ to the quaternion algebra $\mathcal{H}$ provided by 
  Proposition~\ref{P:SpecQuatSymbol}, that is, $\psi$ satisfies $\psi(x)=\mathbf{i},
  \psi(y)=\mathbf{j}$ and $\psi(z)=-1$. Then $\psi(u)=1+\mathbf{i}, \psi(v)=1+\mathbf{j}$, 
  and, by \cite[Theorem~2]{GMS99}, $\{u, v\}$ is a free symmetric pair.
  \item\textit{Free unitary pairs.} Let $q>1$ be an integer not divisible by the
	characteristic of $P$  and let $\psi$ be the specialization from $D^{\prime}$
	to $\mathcal{S}_q$ from Proposition~\ref{P:SpecQuatSymbol}. So we have 
	$\psi(x)=\mathbf{i}$, $\psi(y)=\mathbf{j}$ and $\psi(z)=\theta$ and the relations
	$ \mathbf{i}^q=a, \mathbf{j}^q=b, 
	\mathbf{j}\mathbf{i}=\theta\mathbf{i}\mathbf{j}, \theta^q=1$ hold in $\mathcal{S}_q$.
	Set 
	\[u=\bigl(1-zx\bigr)\bigl(1-z^{-1}x\bigr)^{-1} 
	\quad\text{and}\quad
	v=\bigl(1-zy\bigr)\bigl(1-z^{-1}y\bigr)^{-1}.\]
	So, $\psi(u)=\frac{1-\theta\mathbf{i}}{1-\theta^{q-1}\mathbf{i}}$ 
	and $\psi(v)=\frac{1-\theta\mathbf{j}}{1-\theta^{q-1}\mathbf{j}}$.
  Using the right regular representation with respect to the basis $\{1, \mathbf{j}, 
	\dotsc, \mathbf{j}^{q-1} \}$ of $\mathcal{S}_q$ over the field 
	$P(\theta, a, b, \mathbf{i})$, we write $\psi(u)$ and $\psi(v)$ as matrices $A$ and $B$,
	respectively. Let $S$ be the ring of integers of $P(\theta, a, b)(\mathbf{i})$ over 
	$P(\theta,b)[a]$.
	If $\nu$ denotes the nonarchimedean valuation determined by the maximal 
	ideal of $S$ containing $1-\theta\mathbf{i}$, it is easy to verify that $A$ and $B$ satisfy 
	the conditions of \cite[Proposition~2.4]{GL14}. Therefore $A$ and $B^{-1}AB$ 
	generate a free subgroup, and so $\{u, v^{-1}uv\}$ is a free unitary pair in $D^{\prime}$. 
 \end{itemize}

\item $x^{\ast}=x^{-1}, y^{\ast}=y^{-1}, z^{\ast}=z^{-1}$.
 
Use the same pairs as in the proof of \cite[Theorem~1.1]{FG15}.

\item  $x^{\ast}=x, y^{\ast}=y^{-1}, z^{\ast}=z$.

  Let $\psi$ be the specialization from $D^{\prime}$ onto the quaternion algebra 
	$\mathcal{H}$ such that
	$\psi(x)=\mathbf{i}, \psi(y)=\mathbf{j}, \psi(z)=-1$, afforded by 
	Proposition~\ref{P:SpecQuatSymbol}.
  \begin{itemize}
  \item\textit{Free symmetric pairs.} Set $u=1+x$, $r=xy^{5}-y^{-5}x$ and $v=(1-r)(1+r)^{-1}$.
	Then $u^{*}=u$, $r^{*}=-r$ and $v^{*}=v^{-1}$. Arguing as in the proof 
	of \cite[Theorem~1.1]{FG15} for the existence of free symmetric pair, case (III)(i), 
	we conclude that  $\{u, v^{-1}uv\}$ is a free symmetric pair.
  \item \emph{Free unitary pairs.} Set $u=xy^{5}-y^{-5}x$, $v=y-y^{-1}$, $r=(1-v)(1+v)^{-1}$
	and $s=(1-u)(1+u)^{-1}$. Arguing as in the proof 
	of \cite[Theorem~1.1]{FG15} for the existence of free unitary pairs, case (III)(i), we 
	can show that $\psi(r)$ and $\psi(s^{-1}rs)$ generate a free subgroup. 
	Therefore $\{r, s^{-1}rs\}$ is a free unitary pair.
 
 \end{itemize} 
\end{enumerate}

\subsection{Proof of Theorem \ref{T:SubMain}}

Since $G$ is a torsion-free nilpotent-by-finite group such that $G^{\ast}=G$, 
it follows that $G$ contains a nilpotent subgroup $H$ such that $[G: H] < \infty$. 
Let $H^{\ast}$ denote the image of $H$ under $\ast$ inside $G$. Since 
$[G: H^{\ast}] < \infty$, it follows, by Poincar\'e's theorem on intersections of
subgroups of finite index, that 
$[G: H\cap H^{\ast}] < \infty$. 
But $G$ is not abelian-by-finite, so $H\cap H^{\ast}$ is nilpotent, nonabelian and 
invariant under $\ast$. Therefore, substituting $G$ by $H\cap H^{\ast}$, if necessary, 
we can assume that $G$ is nonabelian, torsion-free, nilpotent and invariant 
under $\ast$.

By Proposition~\ref{prop:InvHeis}, $G$ contains a $\ast$-invariant Heisenberg subgroup 
$\Gamma = \langle x, y, z: [y,x]=z\ne 1, [x,z]=[y,z]=1 \rangle$ in which one of the following
holds:
\begin{enumerate}
\item $x^{\ast}=x, y^{\ast}=y$ and $z^{\ast}=z^{-1}$;
\item $x^{\ast}=x, y^{\ast}=y^{-1}$ and $z^{\ast}=z$;
\item $x^{\ast}=x^{-1}, y^{\ast}=y^{-1}$ and $z^{\ast}=z^{-1}$.
\end{enumerate}

We have two possibilities: either $z$ is transcendental over the prime field $P$ of
$k$ or $z$ is algebraic over $P$. In this last case, $P=\Q$, because if $z$ where
algebraic over a finite field, it would have finite order, but it is contained 
in the torsion-free group $G$.

\textit{Suppose $z$ is transcendental over $P$}. In this case, it has been
shown in the proof of \cite[Theorem~1.3]{GP15} that the group algebra 
$P[\Gamma]$ embeds into $D$, and we can follow the steps trodden in the past theorem.
Let $D^{\prime}$ be the subdivision ring of $D$ generated by $P[\Gamma]$
and let $\psi$ be the usual specialization from $D^{\prime}$ to the quaternion 
algebra $\mathcal{H}$ satisfying $\psi(x)=\mathbf{i}, \psi(y)=\mathbf{j}$ and 
$\psi(z)=-1$.
We consider each of the three possibilities for $\ast$ on the generators separately:
\begin{enumerate}
\item $x^{\ast}=x, y^{\ast}=y$ and $z^{\ast}=z^{-1}$. Let $u=1+x$ and $v=1+y$.
Since $x, y \in G$, both $s=uy^{-1}u^{-1}y$ and $t=vx^{-1}v^{-1}x$ belong to 
$N\cap D^{\prime}$, and
we have $\psi(s)=(1-a)^{-1}(1+\mathbf{i})^{2}$,  $\psi(t)=(1-b)^{-1}(1+\mathbf{j})^{2}$.
Since $N$ is invariant under $\ast$, it follows that $ss^{\ast}, tt^{\ast} \in N$.
Now, by \cite[Proposition 16]{GMS99}, it follows that 
$\psi(ss^{\ast})=(1-a)^{-2}(1+\mathbf{i})^{4}$ and 
$\psi(tt^{\ast})=(1-b)^{-2}(1+\mathbf{j})^{4}$ generate a free subgroup; 
so, $\{ss^{\ast}, tt^{\ast}\}$ is a free symmetric pair.

\item $x^{\ast}=x$, $y^{\ast}=y^{-1}$ and $z^{\ast}=z$. Let $u=1+x$ and $v=1+y+y^{-1}$. 
We have $\psi(u)=1+\mathbf{i}$ and $\psi(v)=1+\beta\mathbf{j}$, where $\beta=1+b^{-1}$.
Defining  $s=uyu^{-1}y^{-1}$ and $t=vxv^{-1}x^{-1}$, we get 
$\psi(s)=(1-a)^{-1}b^{-1}(1+\mathbf{i})\mathbf{j}(1-\mathbf{i})\mathbf{j}$ and, so, 
$\psi(ss^{\ast})=(1-a)^{-2}(1+\mathbf{i})^{4}$. Similarly 
$\psi(tt^{\ast})=(1-\beta^{2}b)^{-2}(1+\beta\mathbf{j})^{4}$.
By \cite[Theorem~2]{GMS99}, $\psi(ss^{\ast})$ and $\psi(tt^{\ast})$ generate a free subgroup. 
So the same is true of the symmetric elements $ss^{\ast}$ and $tt^{\ast}$ in $N$. 

\item $x^{\ast}=x^{-1}$, $y^{\ast}=y^{-1}$ and $z^{\ast}=z^{-1}$.
Let $u=1+x+x^{-1}$ and $v=1+y+y^{-1}$, and define $s=yuy^{-1}u^{-1}$ and $t=xvx^{-1}v^{-1}$. 
Using the abbreviations $\alpha=1+a^{-1}$ and $\beta=1-b^{-1}$, we have 
\[
\psi(ss^{\ast})=\frac{(1-\alpha\mathbf{i})^{4}}{(1-\alpha^{2}a)^{2}}
\quad\text{and}\quad
\psi(tt^{\ast})=\frac{(1-\beta\mathbf{j})^{4}}{(1-\beta^{2}b)^{2}}.
\]
From \cite[Theorem 2]{GMS99}, it follows that $\{\psi(ss^{\ast}), \psi(tt^{\ast})\}$ is a free 
pair, and consequently $\{ss^{\ast}, tt^{\ast}\}$ is a free symmetric pair in $N$.
\end{enumerate}

\textit{Suppose $z$ is algebraic over $\Q$.} Here, the construction in
the proof of \cite[Theorem~1.3]{GP15} provides a specialization $\psi$ from $D^{\prime}$
to $\mathcal{H}$ such that $\psi(x^{2^n})=\mathbf{i}, \psi(y)=\mathbf{j}$ and
$\psi(z^{2^n})=-1$, for a suitable positive integer $n$. Substituting $x$ by $x^{2^{n}}$ in 
the formulas occurring in the case of $z$ transcendental over $P$, considered above,
and acting accordingly, we obtain our free symmetric pairs.

\section{Residually torsion-free nilpotent groups}\label{SR}

In this section we show how to extend the results in the previous
section to residually torsion-free nilpotent groups through
an argument, used previously in \cite{FGS13} and \cite{S11}, involving the 
Malcev-Neumann series ring defined
by an orderable group. 

\subsection{Preliminaries}\label{sec:preliminaries}



The following definitions and comments are from
\cite[Chapter~1]{Passman1}.

Let $R$ be a ring and let $G$ be a group. A \emph{crossed product}
of $G$ over $R$ is an associative ring $R[\overline{G};\sigma,\tau]$
which contains $R$ as a subring and it is
a free left $R$-module with $R$-basis the set $\overline{G}$, a copy
of $G$. Thus each element of $R[\overline{G};\sigma,\tau]$ is uniquely
expressed as a finite sum $\sum\limits_{x\in G}r_x\overline{x}$, with $r_x\in
R$. Addition is component-wise and multiplication is determined by
the two rules below. For $x, y \in G$ we have a \emph{twisting}
\[
\overline{x}\,\overline{y} = \tau(x,y)\overline{xy},
\]
where $\tau$ is a map from $G\times G$ to the group of units $U(R)$ of $R$. Furthermore,
for $x\in G$ and $r\in R$, we have an \emph{action}
\[
\overline{x}r=r^{\sigma(x)}\overline{x},
\]
where $\sigma\colon G\to \Aut(R)$ and $r^{\sigma(x)}$
denotes the image of $r$ by $\sigma(x)$.

If $d\colon G\to U(R)$ assigns to each element $x\in G$ a
unit $d_x$ of $R$, then $\widetilde{G} = \{ \widetilde{x} = d_x\overline{x} : 
x\in G \}$ is another $R$-basis for $R[\overline{G};\sigma,\tau]$.
Moreover there exist maps $\sigma'\colon G\to \Aut(R)$ and
$\tau'\colon G\times G\to U(R)$ such that
$R[\overline{G};\sigma,\tau]=R[\widetilde{G};\sigma',\tau']$. We call this a
\emph{diagonal change of basis}. Via a diagonal change of basis, if
necessary, we will assume that $1_{R[\overline{G};\sigma,\tau]}=
\overline{1}$. The embedding of $R$ into $R[\overline{G};\sigma,\tau]$ is then
given by $r\mapsto r\overline{1}$. Note that $\overline{G}$ acts on both
$R[\overline{G};\sigma,\tau]$ and $R$ by conjugation and that for $x\in
G$ and $r \in R$, we have $\overline{x}r\overline{x}^{-1} = r^{\sigma(x)}$.

If there is no action or twisting, that is, if $\sigma(x)$ is the
identity map and $\tau(x,y)= 1$ for all $x,y \in G$, then
$R[\overline{G};\sigma,\tau]= R[G]$, the usual \emph{group ring}.

If $N$ is a subgroup of $G$, the crossed product
$R[\overline{N};\sigma_{\mid N},\tau_{\mid N\times N}]$ embeds into
$R[\overline{G};\sigma,\tau]$, and we will denote it by
$R[\overline{N};\sigma,\tau]$.

Suppose that $N$ is a normal subgroup of $G$ and let
$S=R[\overline{N};\sigma,\tau]$. Then $R[\overline{G};\sigma,\tau]$ can be
seen as a crossed product of $G/N$ over $S$. More precisely
$R[\overline{G};\sigma,\tau]=S[\overline{G/N};\widetilde{\sigma},\widetilde{\tau}]$,
for appropriate $\widetilde{\sigma}$ and $\widetilde{\tau}$. 
Below we will show in detail how this is done for a group ring $R[G]$.

\medskip

A group $G$ is  \emph{orderable} if there exists a total order $<$
in $G$ such that, for all  $x, y, z \in G$,
\[
 x < y \quad\text{implies that}\quad zx < zy \text{ and } xz < yz.
\]
In this event we say that $(G,<)$ is an \emph{ordered group}.
For further details on ordered groups
the reader is referred to any monograph on the subject such as
\cite{Fuchs}. 

Let $R$ be a ring and let $(G,<)$ be an ordered group. Suppose that
$R[\overline{G};\sigma,\tau]$ is a crossed product of $G$ over $R$.
We define a new ring,
denoted $R((\overline{G};\sigma,\tau,<))$ and called \emph{Malcev-Neumann
series ring}, in which $R[\overline{G};\sigma,\tau]$ embeds. As a set,
\[
R((\overline{G};\sigma,\tau,<))=\Bigl\{f=\sum_{x\in G}a_x\overline{x} :  a_x\in R,\ \supp(f) 
\text{ is well-ordered}\Bigr\},
\] where $\supp(f)=\{x\in G\mid a_x\neq 0\}$.

Addition and multiplication are defined extending the ones in
$R[\overline{G};\sigma,\tau]$. That is, given $f=\sum\limits_{x\in G}a_x\overline{x}
$ and $g=\sum\limits_{x\in G}b_x\overline{x}$, elements of
$R((\overline{G};\sigma,\tau,<))$, one has
\[
f+g=\sum_{x\in G}(a_x+b_x)\overline{x} \quad\text{and}\quad
fg=\sum_{x\in G}\Bigl(\sum_{yz=x}a_yb_z^{\sigma(y)}\tau(y,z)\Bigr)\overline{x}.
\]
When the crossed product is a group ring $R[G]$, its Malcev-Neumann series
ring will be denoted by $R((G;<))$.

In general,  for any ring $R$ and  $f\in R((\overline{G};\sigma,\tau,<))$,   
if the
coefficient of $\min\supp f$  is an invertible element of $R$, then $f$ is
invertible. Hence, if $R$ is a division ring, then
$R((\overline{G};\sigma,\tau,<))$ will be a division ring (as proved in \cite{Malcev}
and \cite{Neumann}). 
If $R$ is a domain, then $R((\overline{G};\sigma,\tau,<))$ is a domain, and
$f\in R((\overline{G};\sigma,\tau,<))$ is invertible if and only if the
coefficient of $\min\supp f$ is invertible in $R$. 

If $K$ is a division ring, the division subring of $K((\overline{G};\sigma,\tau,<))$
generated by $K[\overline{G};\sigma,\tau]$ will be
called the \emph{Malcev-Neumann division ring of fractions} of
$K[\overline{G};\sigma,\tau]$ and denoted by $K(\overline{G};\sigma,\tau)$. It
is important to observe the following. For a subgroup $H$ of $G$,
$K((\overline{H};\sigma,\tau))$ and $K(\overline{H};\sigma,\tau)$ can be regarded
as division subrings of  $K((\overline{G};\sigma,\tau))$ and
$K(\overline{G};\sigma,\tau)$, respectively, in the natural way. In the
case of the group ring $K[G]$, the Malcev-Neumann division ring of
fractions is denoted by $K(G)$. We remark that
$K(\overline{G};\sigma,\tau)$ does not depend on the order $<$ of $G$,
see \cite{Hughes}. When the crossed product $K[\overline{G};\sigma,\tau]$ is 
an Ore domain, then $K(G;\sigma,\tau)$ is the Ore ring of quotients of $K[\overline{G};\sigma,\tau]$.

\subsection{The series argument}

Let $k$ be a field, $G$ a group and $N$ a normal subgroup of $G$. Then the group ring $k[G]$
can be regarded as a crossed product $k[N][\overline{{G}/{N}};\sigma,\tau]$ of the group $G/N$ over
the ring $k[N]$. More precisely, for each $1\neq\alpha\in G/N,$ let $x_\alpha\in G$ be a fixed
representative of the class $\alpha$. For the class $1\in G/N$
choose $x_1=1\in G$. Set $\overline{\alpha}=x_\alpha$ for each $\alpha\in
G/N$. Then $G$ can be written as a disjoint union $G=\bigcup\{N\overline{\alpha} : 
\alpha\in G/N\}$. Define
$\overline{G/N}=\{\overline{\alpha} :  \alpha\in G/N\}\subseteq G$.
This shows that
$k[G]=\bigoplus\limits_{\alpha\in G/N}k[N]\overline{\alpha}$.
Therefore $\overline{G/N}$ is a $k[N]$-basis for
$k[G]$. Moreover, we have maps
\[
\sigma\colon G/N \longrightarrow \Aut(k[N]) \quad\text{and}\quad
\tau\colon G/N\times G/N\longrightarrow U(k[N])
\]
such that $\sigma(\alpha)(y)=x_{\alpha}yx_{\alpha}^{-1}$,
for all $\alpha\in G/N$ and $y\in k[N]$, and $\tau(\alpha,\beta)=n_{\alpha\beta}$ 
is the unique element in $N$ such that
$n_{\alpha\beta}x_{\alpha\beta}=x_\alpha x_\beta$, for all $\alpha,\beta\in G/N$. 
Then,
\[
\overline{\alpha}\Bigl(\sum_{n\in N}a_nn\Bigr)=
\Bigl(\sum_{n\in N}a_nn\Bigr)^{\sigma(\alpha)}\overline{\alpha}
\quad\text{and}\quad
\overline{\alpha}\overline{\beta}=\tau(\alpha,\beta)\overline{\alpha\beta},
\]
as desired.

Notice that the assumption $x_1=1$ implies that $k[N]$ embeds in
$k[N][\overline{G/N};\sigma,\tau]$ via $an\mapsto an\cdot 1.$

The next result shows that writing a group algebra as a crossed product
over a quotient group allows the construction of an algebra homomorphism
onto the Malcev-Neumann series ring of the quotient group, in case it is orderable.
This will be used in order to pull back results known to be valid 
over ``simpler'' groups to more general situations.

%
 %

\begin{proposition}\label{prop:homomorphismseries}
Let $k$ be a field, let $G$ be a group and let $N$ be a normal subgroup of $G$ such 
that $G/N$ is orderable. Suppose that $<$ is a total order in $G/N$ such that 
$(G/N,<)$ is an ordered group. Let $\{x_\alpha\}_{\alpha\in G/N}$ be a 
left transversal of $N$ in $G$ with $x_1=1$ and regard the group ring
$k[G]$ as a crossed product $k[N][\overline{G/N};\sigma,\tau]$. Let 
$\varepsilon_N\colon k[N]\to k$ denote the usual augmentation map. 
Then the map $\Phi_N\colon k[N]((\overline{G/N};\sigma,\tau,<))\to 
k((G/N;<))$ defined by
\[
\Phi_N\Bigl(\sum_{\alpha\in G/N}f_\alpha \overline{\alpha}\Bigr)=
\sum_{\alpha\in G/N}\varepsilon_N(f_\alpha)\alpha,
\]
where $f_\alpha\in k[N]$, for each $\alpha\in G/N$, is a $k$-algebra homomorphism.
\end{proposition}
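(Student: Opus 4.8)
The plan is to verify directly the three defining properties of a $k$-algebra homomorphism: that $\Phi_N$ is well defined (i.e.\ lands in $k((G/N;<))$), that it is $k$-linear and unital, and that it is multiplicative. The first two are immediate, and essentially all the content sits in multiplicativity, where the crossed-product data $\sigma$ and $\tau$ on the source must be shown to collapse under the augmentation.

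First I would check that $\Phi_N$ is well defined. For $f=\sum_{\alpha}f_\alpha\overline{\alpha}$ one has $\supp\Phi_N(f)=\{\alpha : \varepsilon_N(f_\alpha)\ne 0\}\subseteq\{\alpha : f_\alpha\ne 0\}=\supp f$, and a subset of a well-ordered set is well-ordered; hence $\Phi_N(f)$ is a legitimate element of $k((G/N;<))$. That $\Phi_N$ is additive and $k$-linear is clear, since $\varepsilon_N$ is $k$-linear and $\Phi_N$ acts coefficientwise. Finally, under the convention $1=\overline{1}$ recalled above, $\Phi_N(\overline{1})=\varepsilon_N(1)\cdot 1=1$, so the identity is preserved.

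The decisive step is multiplicativity. Writing $f=\sum_\alpha f_\alpha\overline\alpha$ and $g=\sum_\beta g_\beta\overline\beta$, the multiplication rule of the Malcev-Neumann series ring gives
\[
fg=\sum_{\gamma\in G/N}\Bigl(\sum_{\alpha\beta=\gamma}f_\alpha\, g_\beta^{\sigma(\alpha)}\,\tau(\alpha,\beta)\Bigr)\overline\gamma,
\]
with each inner sum finite, as part of the definition of the product recalled above. Applying $\Phi_N$ and using that $\varepsilon_N\colon k[N]\to k$ is a ring homomorphism, the coefficient of $\gamma$ in $\Phi_N(fg)$ becomes $\sum_{\alpha\beta=\gamma}\varepsilon_N(f_\alpha)\,\varepsilon_N(g_\beta^{\sigma(\alpha)})\,\varepsilon_N(\tau(\alpha,\beta))$. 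Here I would invoke two observations. First, $\tau(\alpha,\beta)=n_{\alpha\beta}\in N$, so $\varepsilon_N(\tau(\alpha,\beta))=1$. Second, since $N\vartriangleleft G$, the automorphism $\sigma(\alpha)$ restricts to the group automorphism $n\mapsto x_\alpha n x_\alpha^{-1}$ of $N$, under which the augmentation is invariant; hence $\varepsilon_N\circ\sigma(\alpha)=\varepsilon_N$ on $k[N]$ and $\varepsilon_N(g_\beta^{\sigma(\alpha)})=\varepsilon_N(g_\beta)$. The coefficient of $\gamma$ therefore reduces to $\sum_{\alpha\beta=\gamma}\varepsilon_N(f_\alpha)\varepsilon_N(g_\beta)$, which is precisely the coefficient of $\gamma$ in the product $\Phi_N(f)\Phi_N(g)$ computed in the untwisted group-ring Malcev-Neumann ring $k((G/N;<))$. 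This gives $\Phi_N(fg)=\Phi_N(f)\Phi_N(g)$.

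The only point requiring care — what I would regard as the crux rather than a genuine obstacle — is exactly these two identities: that augmentation annihilates the twisting $\tau$ and is unaffected by the action $\sigma$. Conceptually this says that $\Phi_N$ is nothing but the series-level extension of the augmentation $k[N]\to k$, the crossed-product structure on the source disappearing because $\sigma$ and $\tau$ are built from elements of $N$, all of which augmentation sends to $1$. No well-orderedness or convergence subtleties arise beyond the finiteness of the convolution sums, which is already built into the definition of the product in the source ring.
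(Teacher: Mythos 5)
Your proposal is correct and takes essentially the same route as the paper's own proof: a direct verification of multiplicativity in which the two key identities are exactly those the paper isolates, namely $\varepsilon_N(\tau(\alpha,\beta))=1$ because $\tau(\alpha,\beta)=n_{\alpha\beta}\in N$, and $\varepsilon_N\circ\sigma(\alpha)=\varepsilon_N$ because $\sigma(\alpha)$ acts on $k[N]$ by conjugation by $x_\alpha$, which permutes the group elements of $N$. Your extra checks (well-definedness via $\supp\Phi_N(f)\subseteq\supp f$, $k$-linearity, and preservation of the unit) are sound elaborations of points the paper dispatches in one line.
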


\begin{proof}
The map $\Phi_N$ is well defined by the definition of the Malcev-Neumann series rings.
Let $f,g\in K[N]((\overline{G/N};\sigma,\tau;<))$, say
$f=\sum\limits_{\alpha\in G/N}f_\alpha\overline{\alpha}$ and 
$g=\sum\limits_{\alpha\in G/N} g_\alpha\overline{\alpha}$,
$f_\alpha, g_\alpha\in k[N]$, for each $\alpha \in G/N$. Then,
clearly, $\Phi_N(f+g)=\Phi_N(f)+\Phi_N(g)$ and
\begin{align*}
\Phi_N(fg) & =  \Phi_N\biggl(\Bigl(\sum_{\alpha\in G/N}f_\alpha\overline{\alpha}\Bigr) 
\Bigl(\sum_{\beta\in G/N}g_\beta\overline{\beta}  \Bigr) \biggr) \\
           & =  \Phi_N\biggl( \sum_{\gamma\in G/N} \Bigl(\sum_{\alpha\beta=\gamma} f_\alpha 
					g_\beta^{\sigma(\alpha)}
            \tau(\alpha,\beta)\Bigr)\overline{\gamma} \biggr) \\
           & =  \sum_{\gamma\in G/N}\varepsilon_N\Bigl(\sum_{\alpha\beta=\gamma} f_\alpha 
					g_\beta^{\sigma(\alpha)}
           \tau(\alpha,\beta) \Bigr)\gamma.           
\end{align*}
Since $\tau(\alpha,\beta)=n_{\alpha\beta}$ is the unique element in $N$ 
such that $x_\alpha x_\beta=n_{\alpha\beta}x_{\alpha\beta}$, it follows that 
$\varepsilon_N(\tau(\alpha,\beta))=1$. Now, given 
$g_{\beta}=\sum_{n\in N}b_{\beta n}n$, with $b_{\beta n}\in k$ and $n\in N$, 
then \[g_\beta^{\sigma(\alpha)}=x_\alpha\Bigl(\sum\limits_{n\in N}
b_{\beta n}n\Bigr) x_\alpha^{-1}=
\sum\limits_{n\in N}b_{\beta n}x_\alpha n x_\alpha^{-1}.\] Hence,
$\varepsilon_N(g_\beta)=\varepsilon_N(g_\beta^{\sigma(\alpha)})$ and, therefore,
$\varepsilon_N\Bigl(\sum\limits_{\alpha\beta=\gamma} f_\alpha 
g_\beta^{\sigma(\alpha)}\tau(\alpha,\beta) \Bigr)=
\Bigl(\sum\limits_{\alpha\beta=\gamma} 
\varepsilon_N(f_\alpha)\varepsilon_N(g_\beta)\Bigr)$. So,
\begin{align*}
\Phi_N(fg) 
& =  \sum_{\gamma\in G/N}\Bigl(\sum_{\alpha\beta=\gamma} 
\varepsilon_N(f_\alpha)\varepsilon_N(g_\beta)\Bigr)\gamma \\
& =  \Bigl(\sum_{\alpha\in G/N}\varepsilon_N(f_\alpha)\alpha\Bigr)
\Bigl(\sum_{\beta\in G/N}\varepsilon_N(g_\beta)\beta\Bigr) \\
& =  \Phi(f)\Phi(g). \qedhere
\end{align*} 
\end{proof}

We now introduce a notion that applies to residually torsion-free nilpotent
groups.

\begin{definition}\label{def:good}
Let $k$ be a field, let $G$ be a group and suppose that the group ring $k[G]$
embeds in a division ring $D$. If $H$ is a subgroup of $G$, denote by $D_H$
the subdivision ring of $D$ generated by $k[H]$. Let $N$ be a normal subgroup
of $G$. We say that the embedding $k[G]\hookrightarrow D$ is a 
\emph{good embedding for $k[N]$} if the following conditions are satisfied:
\begin{enumerate}
\item[(i)] the group $G/N$ is orderable;
\item[(ii)] for all division rings $F$, any crossed product $F[\overline{G/N}; \sigma,\tau]$ 
is an Ore domain;
\item[(iii)] there exists a left transversal $\{x_\alpha\}_{\alpha\in G/N}$ of $N$ in $G$ 
which is left $D_N$-linearly independent. 
\end{enumerate}
\end{definition}

In the context of Definition~\ref{def:good}, it is not difficult to prove that condition 
(iii) implies that any 
left transversal $\{x_\alpha\}_{\alpha\in G/N}$ of $N$ in $G$ is
left $D_N$-linearly independent. Also, condition (iii) implies that the subring of 
$D$ generated by $D_N$ and $\{x_\alpha\}_{\alpha\in G/N}$ is a crossed product 
$D_N[\overline{G/N};\sigma,\tau]$.

Before giving examples of good embeddings, let us recall
some known results. Let $W$ be a torsion-free nilpotent group. 
By, for example, \cite[Lemma~13.1.6]{dP85}, $W$ is orderable. Moreover, 
any crossed product $F[\overline{W};\sigma,\tau]$, with $F$ a division ring,
is an Ore domain by \cite[Corollary~37.11]{Passman1}, for example.
With this results in mind, we proceed to present the main examples of good embeddings
for our purposes.

\begin{example} Let $k$ be a field and let $G$ be a group such that 
the group algebra $k[G]$ is an Ore domain. Let $D$ be the Ore division
ring of fractions of $k[G]$. Then $k[G]\hookrightarrow D$ is a good
embedding for any normal subgroup $N$ of $G$ such that $G/N$ is
torsion-free nilpotent. This follows from the fact that $k[N]$
is an Ore domain for any subgroup $N$ of $G$. Hence $D_N$ is the Ore division
ring of fractions of $k[N]$, and the elements of $D_N$ are left fractions
$p^{-1}q$ with $p,q\in k[N]$, $p\neq 0$. Using the common denominator property,
it is not difficult to prove that any left transversal of $N$ in $G$ 
is left $D_N$-linearly independent (because it is left $k[N]$-linearly independent).
\end{example}

\begin{example}
Let $k$ be a field, let $(G,<)$ be an ordered group and let $k(G)$
denote the Malcev-Neumann division ring of fractions of the group
algebra $k[G]$. Then $k[G]\hookrightarrow k(G)$ is a good
embedding for any normal subgroup $N$ of $G$ such that $G/N$ is
torsion-free nilpotent. For condition (iii),
let $\{x_\alpha\}_{\alpha\in G/N}$ be a left transversal of $N$ in $G$. For any
$n,n'\in N$ and $\alpha,\alpha'\in G/N$, $nx_\alpha=n'x_{\alpha'}$ if, and only if,
$n=n'$ and $x_\alpha=x_{\alpha'}$.  Since $k(N)$, the division ring generated
by $k[N]$ inside $k(G)$, is contained in $k((N;<))\subseteq k((G;<))$,
it is not difficult to prove that the elements of the left transversal are left
linearly independent over $k((N;<))$ and, hence, over $k(N)$.
\end{example}

Given a field $k$, a group $G$ and a division ring $D$ such that $k[G]$ embeds
in $D$, let $N$ be a normal subgroup of $G$ such that the embedding $k[G]\hookrightarrow D$
is good for $k[N]$. Suppose that $G/N$ is endowed with a total order such that 
$(G/N,<)$ is an ordered group. By condition (ii),
$D_N[\overline{G/N};\sigma,\tau]$ is an Ore domain embedded in $D_G$. 
By the universal property of 
the Ore localization, $D_G$ is the Ore division ring of quotients of 
$D_N[\overline{G/N};\sigma,\tau]$.
But $D_N[\overline{G/N};\sigma,\tau]$ also embeds in the division ring 
$D_N((G/N;\sigma,\tau,<))$
and, again, by the universal property of the Ore localization $D_N(G/N;\sigma,\tau)$ is the
Ore division ring of quotients of $D_N[\overline{G/N};\sigma,\tau]$.
Therefore $D_G$ can be identified with
$D_N(G/N;\sigma,\tau)$ inside the Malcev-Neumann series ring $D_N((G/N;\sigma,\tau,<))$. 
 
Next, we show that good embeddings provide an appropriate setting for
pulling back free pairs from division rings of fractions of torsion-free
nilpotent group algebras to more general division rings.

\begin{theorem}\label{theo:seriesargument}
Let $G$ be a group with an involution $\ast$ and let $N$ be a $\ast$-invariant
normal subgroup of $G$ such that $G/N$ is a nonabelian torsion-free nilpotent group. 
Let $k$ be a field of characteristic different from $2$ and let $k[G]\hookrightarrow D$ 
be a good embedding for $k[N]$ 
of the group algebra $k[G]$
in the division ring $D$. Suppose that the induced $k$-involution $\ast$ on $k[G]$
can be extended to a $k$-involution in $D$. 
Then $D_G$ contains a free symmetric pair with respect to $\ast$.
\end{theorem}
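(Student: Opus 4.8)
The plan is to reduce to the torsion-free nilpotent case of Theorem~\ref{T:Main} and then transport a free symmetric pair from the quotient down to $D_G$ by means of the augmentation homomorphism of Proposition~\ref{prop:homomorphismseries}. Since $N$ is normal and $\ast$-invariant, $\ast$ descends to a group involution on $G/N$, and the latter is nonabelian torsion-free nilpotent. Because $\charac k\neq 2$, Theorem~\ref{T:Main}, applied to the Ore division ring of fractions of $k[G/N]$ (which is exactly $k(G/N)$), furnishes a free symmetric pair $\{P,Q\}$ in $k(G/N)$ for the induced $k$-involution. This is the pair I will lift.

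For the transport I would use the identification, established in the paragraph preceding the theorem, of $D_G$ with $D_N(G/N;\sigma,\tau)$ inside $\mathcal D:=D_N((G/N;\sigma,\tau,<))$; the subring $A:=k[N]((\overline{G/N};\sigma,\tau,<))$ also sits in $\mathcal D$, and on it the $k$-algebra homomorphism $\Phi_N\colon A\to k((G/N;<))$ is defined. Two properties of $\Phi_N$ are decisive. First, $\Phi_N$ intertwines the involutions: writing $\overline\alpha^{\,\ast}=n_\alpha\overline{\alpha^{\ast}}$ with $n_\alpha\in N$, one gets $\Phi_N(\overline\alpha^{\,\ast})=\varepsilon_N(n_\alpha)\alpha^{\ast}=\alpha^{\ast}=\Phi_N(\overline\alpha)^{\ast}$; as $\varepsilon_N$ fixes $k$ and $\Phi_N$ is multiplicative, $\Phi_N(f^{\ast})=\Phi_N(f)^{\ast}$ for all $f\in A$. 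Second, every element of $k(G/N)$ lies in the image of $\Phi_N$, since a group element $\alpha$ lifts to $\overline\alpha$ and a nonzero element of $k[G/N]$ lifts to a series whose lowest coefficient is a nonzero scalar, hence invertible in $A$.

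Now I would lift and symmetrize. Choosing preimages $\tilde P,\tilde Q\in D_G\cap A$ of $P,Q$ and using $\charac k\neq 2$, set
\[
\hat P=\tfrac12\bigl(\tilde P+\tilde P^{\ast}\bigr),\qquad
\hat Q=\tfrac12\bigl(\tilde Q+\tilde Q^{\ast}\bigr).
\]
These are symmetric and lie in $D_G$, and by the first property above $\Phi_N(\hat P)=\tfrac12(P+P^{\ast})=P$ and likewise $\Phi_N(\hat Q)=Q$. If the lifts are arranged so that $\hat P,\hat Q$ are moreover invertible in $A$, then their inverses also lie in $A$, so for every reduced word $w$ one has $w(\hat P,\hat Q)\in A$ and $\Phi_N(w(\hat P,\hat Q))=w(P,Q)$; since $\{P,Q\}$ is free in $k(G/N)^{\bullet}$, no nontrivial $w$ can give $1$, and $\{\hat P,\hat Q\}$ is a free symmetric pair in $D_G$.

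The main obstacle is exactly the invertibility bookkeeping in the last step, that is, upgrading the bare homomorphism $\Phi_N$ to a genuine specialization from $D_G$ onto $k(G/N)$. An element of $A$ is invertible there only when its lowest-order coefficient is a unit of $k[N]$, a condition the augmentation cannot detect and which the symmetrization can destroy: on the $(-1)$-eigenspace of the involution induced on the abelianization of $G/N$ (present precisely in the cases of Proposition~\ref{prop:InvHeis} with an antisymmetric generator) no order is $\ast$-invariant, so $\tilde P^{\ast}$ may acquire support below that of $\tilde P$. Securing symmetric lifts that remain invertible in $A$, uniformly over the three involution types of Proposition~\ref{prop:InvHeis}, is the delicate heart of the argument; equivalently, one must extend the augmentation $\varepsilon_N$ past $k[N]$ to a $\ast$-invariant place of the coefficient division ring $D_N$ onto $k$.
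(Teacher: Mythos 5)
Your reduction and transport framework---descend $\ast$ to $G/N$, invoke Theorem~\ref{T:Main} in $k(G/N)$, and pull back along $\Phi_N$ using the identification of $D_G$ with $D_N(G/N;\sigma,\tau)$ inside the series ring---is exactly the paper's. But the proof is not complete: the step you yourself flag as ``the delicate heart of the argument'' is a genuine gap, not bookkeeping, and your additive symmetrization $\hat P=\tfrac12(\tilde P+\tilde P^{\ast})$ cannot close it. As you correctly observe, invertibility in $k[N]((\overline{G/N};\sigma,\tau,<))$ is governed by the lowest coefficient of the support, and $\tilde P^{\ast}$ can have support strictly below that of $\tilde P$ (no order on $G/N$ need be $\ast$-invariant), so the sum may fail to have a unit as its lowest coefficient; the fix you gesture at---a $\ast$-equivariant place of $D_N$ onto $k$ extending $\varepsilon_N$---is neither supplied nor needed, and in general need not exist. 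There is also a smaller formal slip: $\ast$ is defined only on $D$, not on the whole series ring, so the assertion $\Phi_N(f^{\ast})=\Phi_N(f)^{\ast}$ ``for all $f\in A$'' is not literally meaningful; the paper verifies the intertwining identity \eqref{eq:involutiononcrossed} only for elements of $k[G]$ with coefficients in $k$, which is all it ever uses.

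The paper's resolution is a multiplicative trick that avoids symmetrizing lifts altogether. Given a nonzero symmetric $A\in k(G/N)$, write $A=p_Aq_A^{-1}$ with $p_A,q_A\in k[G/N]$ (Ore fractions) and lift numerator and denominator \emph{separately} to $P_A,Q_A\in k[G]$ with scalar coefficients. Then each of $P_A$, $Q_A$, $P_A^{\ast}$, $Q_A^{\ast}$ has lowest supported coefficient either a nonzero scalar or, by \eqref{eq:involutiononcrossed}, a nonzero scalar times an element $n_\alpha\in N$---in every case a unit of $k[N]$---so all four are invertible in $k[N]((\overline{G/N};\sigma,\tau,<))$. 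The element $X=P_AQ_A^{-1}(Q_A^{\ast})^{-1}P_A^{\ast}$ is symmetric \emph{by construction}, invertible, and satisfies $\Phi_N(X)=p_Aq_A^{-1}(q_A^{\ast})^{-1}p_A^{\ast}=AA^{\ast}=A^2$ together with $\Phi_N(X^{-1})=A^{-2}$. Since freeness of $\{A,B\}$ implies freeness of $\{A^2,B^2\}$, it suffices to lift $A^2$ and $B^2$ rather than $A$ and $B$: every word in $X^{\pm 1},Y^{\pm 1}$ lies in the series ring and maps under $\Phi_N$ to the corresponding word in $A^{\pm 2},B^{\pm 2}$, so $\{X,Y\}$ is a free symmetric pair in $D_G$. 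Replacing your symmetrize-then-hope-for-invertibility step by this construction turns your outline into a complete proof.
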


\begin{proof}
First observe that $\ast$ defines a $k$-involution on $k[N]$ by restriction. 
It also induces an involution $\ast$ on the quotient group $G/N$, and, therefore,
a $k$-involution on $k[G/N]$. Moreover this can be extended
to an involution $\ast$ on $k(G/N)$, because $k[G/N]$ is an Ore domain.

Let $\{x_\alpha\}_{\alpha\in G/N}$ be a 
left transversal of $N$ in $G$ with $x_1=1$. 
Regard the group algebra
$k[G]$ as a crossed product $k[N][\overline{G/N};\sigma,\tau]$ and let
$\varepsilon_N\colon k[N]\to k$ denote the augmentation map.
Observe that, for each $\alpha\in G/N$, there exists a unique
$n_\alpha\in N$ such that ${\overline{\alpha}}^{\ast}=x_\alpha^{\ast}=
n_\alpha x_{\alpha^{\ast}}=n_\alpha\overline{\alpha^{\ast}}$.  
Take $\sum\limits_{\alpha\in G/N}f_\alpha \overline{\alpha}\in k[G]$ with 
$f_\alpha\in k[N]$. Then
\[
\Bigl(\sum_{\alpha\in G/N}f_\alpha \overline{\alpha}\Bigr)^{\ast}=
\sum_{\alpha\in G/N}{\overline{\alpha}}^{\ast}f_\alpha^{\ast}=
\sum_{\alpha\in G/N}n_\alpha ({f_\alpha^{\ast}})^{\sigma(\alpha^{\ast})}
\overline{\alpha^{\ast}}.
\]
So, if $f_{\alpha}\in k$, for all $\alpha\in G/N$, we have 

\begin{equation}\label{eq:involutiononcrossed}\tag{$\dagger$}
\begin{split}
\Bigl(\sum_{\alpha\in G/N}f_\alpha \overline{\alpha}\Bigr)^{\ast} & =
\sum_{\alpha\in G/N}n_\alpha({f_\alpha^{\ast}})^{\sigma(\alpha^{\ast})}
\overline{\alpha^{\ast}}=\sum\limits_{\alpha\in G/N}n_\alpha f_\alpha 
\overline{\alpha^{\ast}}\\
&=\sum\limits_{\alpha\in G/N} 
f_\alpha n_\alpha\overline{\alpha^{\ast}}. 
\end{split}
\end{equation}

Consider, now, the homomorphism 
$\Phi_N\colon k[N]((\overline{G/N};\sigma,\tau,<))\to k((G/N;<))$, given in 
Proposition~\ref{prop:homomorphismseries}, defined by 
\[
\Phi_N\Bigl(\sum_{\alpha\in G/N}f_\alpha \overline{\alpha}\Bigr)=
\sum_{\alpha\in G/N}\varepsilon_N(f_\alpha)\alpha,
\]
where $f_\alpha\in k[N]$, for each $\alpha\in G/N$.

Suppose that $A$ is a symmetric nonzero element in $k(G/N)$ with respect to $\ast$. 
Then $A=p_Aq_A^{-1}$ for some $p_A,q_A\in k[G/N]\setminus\{0\}$. Write 
$p_A=\sum_{\alpha\in G/N}a_\alpha \alpha$ and $q_A=\sum_{\alpha\in G/N}b_\alpha\alpha$,
with $a_\alpha,b_\alpha\in k$ for each $\alpha\in G/N$. 
Define $P_A=\sum_{\alpha\in G/N}a_\alpha\overline{\alpha}$ and $Q_A=\sum_{\alpha\in G/N}
b_\alpha\overline{\alpha}\in k[G]$.
Note that $P_A$ and $Q_A$ are invertible elements in $k[N]((G/N;\sigma,\tau,<))$ because the 
least element in the support of $P_A$ and $Q_A$ is invertible in $k[N]$ since it 
belongs to $k\setminus\{0\}$.

Consider also $P_A^{\ast}$ and $Q_A^{\ast}$ as elements of $k[N]((G/N;\sigma,\tau,<))$.
By \eqref{eq:involutiononcrossed}, the least elements in the supports of 
$P_A^{\ast}$ and $Q_A^{\ast}$ are of the form $a_\alpha n_\alpha$ for some
$n_\alpha\in N$ and nonzero $a_\alpha\in k$. 
Hence $P_A^{\ast}$ and $Q_A^{\ast}$ are also invertible elements in $k[N]((G/N;\sigma,\tau,<))$.

We have 
\begin{align*}
\Phi_N(P_A) & =  \Phi_N\Bigl( \sum_{\alpha\in G/N}a_\alpha \overline{\alpha} \Bigr) 
              =  \sum_{\alpha\in G/N} \varepsilon_N(a_\alpha)\alpha \\
            & =  \sum_{\alpha\in G/N} a_\alpha\alpha 
              =  p_A.
\intertext{and}
\Phi_N(P_A^{\ast}) & =  \Phi_N\biggl(\Bigl(\sum_{\alpha\in G/N}a_\alpha 
\overline{\alpha}\Bigr)^{\ast}\biggr)
                =  \Phi_N\Bigl(\sum_{\alpha\in G/N}a_\alpha n_\alpha 
\overline{\alpha^{\ast}} \Bigr)\\
              & =  \sum_{\alpha\in G/N} \varepsilon(a_\alpha n_\alpha)\alpha^{\ast}
               =  \sum_{\alpha\in G/N} a_\alpha \alpha^{\ast}
               =  p_A^{\ast}. 
\end{align*}
Similarly, $\Phi_N(Q_A)=q_A$ and $\Phi_N(Q_A^{\ast})=q_A^{\ast}$. Since $\Phi_N$ is a ring homomorphism, it follows that $\Phi_N(Q_A^{-1})=q_A^{-1}$ and
$\Phi_N\bigl((Q_A^{\ast})^{-1}\bigr)=(q_A^{\ast})^{-1}$. 

Consider now the symmetric element $X=P_AQ_A^{-1}(Q_A^{\ast})^{-1}P_A^{\ast}\in D_G$. Then 
\begin{align*}
\Phi_N(X) & =  \Phi_N\bigl(P_AQ_A^{-1}(Q_A^{\ast})^{-1}P_A^{\ast}\bigr) \\
          & =  \Phi_N(P_A)\Phi_N(Q_A^{-1})\Phi_N\bigl((Q_A^{\ast})^{-1}\bigr)
					\Phi_N(P_A^{\ast}) \\
          & =  p_Aq_A^{-1}(q_A^{\ast})^{-1}p_A^{\ast} \\
          & =  p_Aq_A^{-1}(p_Aq_A^{-1})^{\ast} \\
          & =  AA^{\ast} \\
          & =  A^2         
\end{align*}
Since $P_A,P_A^{-1},P_A^{\ast},(P_A^{\ast})^{-1},
Q_A,Q_A^{-1},Q_A^{\ast},(Q_A^{\ast})^{-1}\in k[N]((G/N;\sigma,\tau,<))$, 
if follows that 
$X^{-1}\in k[N]((G/N;\sigma,\tau,<))$. Thus $\Phi_N(X^{-1})=A^{-2}$.

Now let $A,B$ be symmetric elements of $k(G/N)$ generating a free group, which exist
by Theorem~\ref{T:Main}. Clearly, $A^2,B^2$ also generate a free group. By the foregoing argument, there exist $X,Y \in D_G$
such that $\Phi_N(X)=A^2$, $\Phi_N(Y)=B^2$, $\Phi_N(X^{-1})=A^{-2}$ and $\Phi_N(Y^{-1})=B^{-2}$. 
Therefore $X,Y$ also generate a free group.
\end{proof}

\subsection{Proof of Theorem~\ref{th:restorfrnil}}

Here we shall apply the main result of the preceding section in order
to extend Theorem~\ref{T:Main} to the class of residually torsion-free
nilpotent groups.

\begin{definition}
A group $G$ is \emph{residually torsion-free nilpotent} if for
each $g\in G$, there exists a normal subgroup $N_g$ of $G$ such that
$g\notin N_g$ and $G/N_g$ is torsion-free nilpotent.
\end{definition}

Let $G$ be a group. If $H$ is a subgroup of $G$, we denote by $\sqrt{H}$ the subset of $G$ defined by
\[\sqrt{H}=\{x\in G : x^n\in H \text{, for some } n\geq 1\}.\]

For the next result, given a group $G$ we shall denote the $n$-th
term in its lower central series by $\gamma_n(G)$. That is,
we set $\gamma_1(G)=G$ and, for $n\ge 1$, define 
$\gamma_{n+1}(G)=[\gamma_n(G),G]$.

\begin{lemma}
Let  $G$ be a residually torsion-free nilpotent group. Then the following
hold true.
\begin{enumerate}
\item For each $n\geq 1$, $\sqrt{\gamma_n(G)}$ is a normal subgroup of $G$
such that $G/\sqrt{\gamma_n(G)}$ is torsion-free nilpotent.\label{item:1}
\item The chain of normal subgroups of $G$ \label{item:2}
\[
G=\sqrt{\gamma_1(G)}\supseteq \sqrt{\gamma_2(G)}\supseteq \dotsb
\supseteq \sqrt{\gamma_n(G)}\supseteq\dotsb
\]
 satisfies
 \begin{enumerate}
 \item $\bigl[G,\sqrt{\gamma_{n}(G)}\bigr]\subseteq \sqrt{\gamma_{n+1}(G)}$, 
for all $n\geq 1$, and 
\item $\sqrt{\gamma_n(G)}/\sqrt{\gamma_{n+1}(G)}$ is a torsion-free abelian group, 
for all $n\geq 1$. 
 \end{enumerate}
\item $\bigcap_{n\geq 1}\sqrt{\gamma_n(G)}=\{1\}$. \label{item:3}
\item $G$ is an orderable group. \label{item:4}
\end{enumerate}
\end{lemma}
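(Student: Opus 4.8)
The plan is to prove the four assertions in sequence, using at each stage the residual torsion-free nilpotence of $G$ together with the elementary theory of the \emph{isolator} $\sqrt{H}$. The key structural input, which I would establish first as a preliminary observation, is that for a normal subgroup $N$ of $G$ with $G/N$ torsion-free nilpotent, one has $\sqrt{\gamma_n(G)}\subseteq N$ whenever $\gamma_n(G/N)=1$; this follows because $\gamma_n(G)N/N=\gamma_n(G/N)=1$ forces $\gamma_n(G)\subseteq N$, and since $G/N$ is torsion-free, $x^m\in N$ implies $xN$ has finite order in $G/N$, hence $xN=N$, i.e. $x\in N$, so $\sqrt{N}=N\supseteq\sqrt{\gamma_n(G)}$.

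For item~\eqref{item:1}, I would first recall the standard fact that in \emph{any} group $G$ the isolator $\sqrt{\gamma_n(G)}$ of a term of the lower central series is a subgroup (this is where one uses that $G$ is residually torsion-free nilpotent, or more precisely that $\gamma_n(G)$ is isolated in a suitable sense: the products of roots stay roots because commutator identities let one control $(xy)^m$ modulo higher terms). The cleanest route is to show $\sqrt{\gamma_n(G)}=\bigcap N$, the intersection taken over all normal $N$ with $G/N$ torsion-free nilpotent of class $<n$; each such $N$ contains $\sqrt{\gamma_n(G)}$ by the preliminary observation, and conversely an element in every such $N$ lies in $\sqrt{\gamma_n(G)}$ by residual torsion-free nilpotence. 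Being an intersection of normal subgroups, $\sqrt{\gamma_n(G)}$ is normal; and $G/\sqrt{\gamma_n(G)}$ embeds into the product of the torsion-free nilpotent groups $G/N$ of class $<n$, hence is nilpotent of class $<n$ and torsion-free.

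For item~\eqref{item:2}, part (a) is the commutator computation $[\gamma_n(G),G]=\gamma_{n+1}(G)\subseteq\sqrt{\gamma_{n+1}(G)}$ promoted to isolators: given $x\in\sqrt{\gamma_n(G)}$, some power $x^r\in\gamma_n(G)$, so $[x^r,g]\in\gamma_{n+1}(G)$, and one shows $[x,g]$ has a power in $\gamma_{n+1}(G)$ by working in the torsion-free nilpotent quotient $G/\sqrt{\gamma_{n+1}(G)}$, where $\overline{x}$ commutes with everything modulo the center-like layer because $\overline{x}^r$ is central. Part (b): the quotient $\sqrt{\gamma_n(G)}/\sqrt{\gamma_{n+1}(G)}$ is abelian by part (a), and torsion-free because $\sqrt{\gamma_{n+1}(G)}$ is isolated in $G$ (if $\overline{x}^m=1$ then $x^m\in\sqrt{\gamma_{n+1}(G)}$, so $x^{mk}\in\gamma_{n+1}(G)$ for some $k$, whence $x\in\sqrt{\gamma_{n+1}(G)}$). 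Item~\eqref{item:3} is immediate from the description in \eqref{item:1}: if $x\in\bigcap_n\sqrt{\gamma_n(G)}$ then $x$ lies in every normal $N$ with torsion-free nilpotent quotient, but residual torsion-free nilpotence supplies, for $x\neq 1$, some such $N$ with $x\notin N$, a contradiction.

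Finally, item~\eqref{item:4} follows from the standard fact that a group possessing a central (or here, an isolated abelian-factor) series with torsion-free abelian quotients is orderable. Concretely, by \eqref{item:2} the chain $\{\sqrt{\gamma_n(G)}\}$ has torsion-free abelian factors and, by \eqref{item:3}, intersects to $\{1\}$; I would order each factor $\sqrt{\gamma_n(G)}/\sqrt{\gamma_{n+1}(G)}$ (a torsion-free abelian group, hence orderable), then order $G$ lexicographically along the series, using \eqref{item:3} to guarantee that every nontrivial element is detected at some finite stage. The main obstacle I anticipate is the promotion of the ordinary commutator/lower-central identities to the isolators in items~\eqref{item:1} and \eqref{item:2}(a): one must justify that taking roots is compatible with the group and commutator operations, which is exactly where torsion-freeness of the relevant nilpotent quotients is essential and where a careless argument could fail. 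I would isolate this as the preliminary lemma above so that the rest of the proof reduces to bookkeeping.
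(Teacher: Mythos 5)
The one genuine flaw is in your ``cleanest route'' for item (1). The converse inclusion --- that an element lying in \emph{every} normal $N$ with $G/N$ torsion-free nilpotent of class $<n$ must belong to $\sqrt{\gamma_n(G)}$ --- is asserted to follow ``by residual torsion-free nilpotence,'' but it does not: residual torsion-free nilpotence separates a nontrivial element from $1$ by some torsion-free nilpotent quotient of \emph{unspecified, generally unbounded, class}; it provides no quotient of class $<n$ witnessing $x\notin\sqrt{\gamma_n(G)}$. The only evident witness $N$ with $x\notin N$ is $\sqrt{\gamma_n(G)}$ itself, so as a proof of (1) the intersection characterization is circular --- it presupposes exactly what is to be shown, namely that $\sqrt{\gamma_n(G)}$ is a normal subgroup with torsion-free nilpotent quotient of class $<n$. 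Relatedly, your parenthetical claim that the subgroup property of $\sqrt{\gamma_n(G)}$ ``uses that $G$ is residually torsion-free nilpotent'' is off: the correct, hypothesis-free input is Mal'cev's theorem that the torsion elements of a nilpotent group form a (normal) subgroup, applied to $G/\gamma_n(G)$, which is nilpotent of class $<n$ for \emph{any} group $G$. Then $\sqrt{\gamma_n(G)}$ is precisely the preimage of that torsion subgroup, hence normal, and $G/\sqrt{\gamma_n(G)}$ is torsion-free nilpotent of class $<n$; your intersection description becomes a trivial corollary rather than a route to (1). This is exactly what the paper outsources to \cite[Lemma~11.1.8]{dP85}, which covers both (1) and (2), so the repair is to cite or prove that standard fact first and discard the circular detour.

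The remainder is sound and parallels the paper's proof. Your argument for (2a) --- pass to $\overline{G}=G/\sqrt{\gamma_{n+1}(G)}$, note $\gamma_{n+1}(\overline{G})=1$ so $\overline{G}$ has class $\leq n$, observe $\overline{x}^{\,r}\in\gamma_n(\overline{G})$ is central, and conclude $\overline{x}$ is central because roots of central elements are central in torsion-free nilpotent groups (this is \cite[16.2.9]{KM79}, already invoked in the paper's Lemma on commuting powers) --- is correct, and it yields $[x,g]\in\sqrt{\gamma_{n+1}(G)}$ outright, not merely a power of $[x,g]$ in $\gamma_{n+1}(G)$. Items (2b) and (3) are exactly as in the paper: the paper proves (3) by the same chain, using $\gamma_{n_g}(G)\subseteq N_g$ and torsion-freeness of $G/N_g$ to get $\sqrt{\gamma_{n_g}(G)}\subseteq N_g$. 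For (4), the paper simply cites \cite[Theorem~IV.6]{Fuchs}, whereas you sketch the lexicographic construction by hand; that works, but make explicit that the order chosen on each factor $\sqrt{\gamma_n(G)}/\sqrt{\gamma_{n+1}(G)}$ must be invariant under conjugation by $G$ --- which holds here for free, since (2a) says conjugation acts \emph{trivially} on each factor. With the circularity in (1) removed as above, the whole proof goes through.
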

\begin{proof}
\eqref{item:1} and \eqref{item:2} follow from \cite[Lemma~11.1.8]{dP85}. For
\eqref{item:3}, given $g\in G$, since $G/N_g$ is nilpotent, there exists $n_g\geq 1$ 
such that $\gamma_{n_g}(G)\subseteq N_g$. Moreover, since $G/N_g$ is a torsion-free
group, $\sqrt{\gamma_{n_g}(G)}\subseteq N_g$.
Hence,
\[\bigcap_{n\geq 1}\gamma_n(G)\subseteq\bigcap_{n\geq
1}\sqrt{\gamma_n(G)}\subseteq \bigcap_{g\in G}
\sqrt{\gamma_{n_g}(G)}\subseteq \bigcap_{g\in G}N_g=\{1\}.\]
Now,
\eqref{item:4} follows from \eqref{item:1}, \eqref{item:2}, \eqref{item:3},
in view of \cite[Theorem~IV.6]{Fuchs}.
\end{proof}

We are ready to present a proof of Theorem~\ref{th:restorfrnil}.

\begin{proof}[Proof of Theorem~\ref{th:restorfrnil}]
The $k$-involution on the group algebra $k[G]$ induced by the involution
$\ast$ on $G$ extends uniquely to a $k$-involution on the Malcev-Neumann
division ring of fractions $k(G)$ of $k[G]$ (see \cite[Theorem~2.9]{FGS13}). 
Let $n\geq 2$ be fixed. 
Since $G$ is not abelian, $G/\sqrt{\gamma_n(G)}$ is
a nonabelian torsion-free nilpotent group. Apply Theorem~\ref{theo:seriesargument} with
$N=\sqrt{\gamma_n(G)}$.
\end{proof}

It should be remarked that the argument used in the proofs of 
Theorems~\ref{theo:seriesargument} and \ref{th:restorfrnil} can also 
be used to prove the existence of symmetric free group algebras. More
precisely, under the hypothesis of Theorem~\ref{theo:seriesargument}, it can
be shown that if the characteristic of $k$ is zero, then there exist
symmetric elements $X,Y\in D_G$ such that the $k$-subalgebra of $D_G$
generated by $\{X,X^{-1},Y,Y^{-1}\}$ is the free group $k$-algebra generated
by $\{X,Y\}$. For a proof, the same argument used in the proof of 
Theorem~\ref{theo:seriesargument} shows that if $A$ and $B$ are
symmetric elements in $k(G/N)$ generating a free group algebra, then
$X$ and $Y$ will be symmetric elements in $D_G$ generating a free group
algebra. That $k(G/N)$ contains such a pair follows from \cite{SII}. Similarly, one can use this version of
Theorem~\ref{theo:seriesargument} to prove that if $k$ has characteristic zero
and $G$ is a nonabelian residually torsion-free nilpotent group with an involution
$\ast$, then the Malcev-Neumann division ring of fractions $k(G)$ of $k[G]$
contains a free group $k$-algebra generated by symmetric elements with
respect to the $k$-involution on $k(G)$ induced by $\ast$.

\end{document}